\documentclass[12pt]{article}
\usepackage{color}
\usepackage[colorlinks=true]{hyperref}
\usepackage{amsmath,amssymb,amsthm,amscd,enumerate}
\usepackage{xcolor}
\usepackage{mathrsfs}
\usepackage[title]{appendix}
\numberwithin{equation}{section}

\newtheorem{theorem}{Theorem}[section]

\newtheorem{proposition}{Proposition}[section]
\newtheorem{lemma}{Lemma}[section]

\newtheorem{remark}{Remark}[section]

\usepackage{times}
\setlength{\parskip}{0.6em}    
\setlength{\parindent}{1.5em}    
\begin{document}
\title{From nonlinear Schr\"{o}dinger equation to interacting particle system: $1<p<2$}
\author{
Xin Liao\thanks{School of Mathematics and Statistics, Hunan Normal University, Changsha 410012, China. Email: \texttt{xin\_liao@whu.edu.cn}} 
\and
Juntao Lv\thanks{School of Mathematics and Statistics, South-Central Minzu University, Wuhan 430074, China. Email: \texttt{lvjuntao@whu.edu.cn}}}

\date{\today}
\maketitle
\begin{abstract}
  We investigate the limiting behavior of solutions with infinitely many peaks to nonlinear Schr\"{o}dinger equations
		\[-\varepsilon^{2} \Delta u_{\varepsilon}+u_{\varepsilon}=u_{\varepsilon}^{p}, \quad
		u_{\varepsilon}>0  \quad \text{in} \ \mathbb{R}^{n},
		\]
		as $\varepsilon\to 0$, where $p$ is Sobolev subcritical. We derive the interaction law among the limiting peak points and complete the analysis for the previously unresolved range  $1<p<2$,  extending the work of Ao, Lv, and Wang (J. Differential Equations, 2025).
\end{abstract}
\noindent \textbf{Keywords:} Nonlinear Schrödinger equation; Interacting particle system; Reverse Lyapunov-Schmidt reduction

\noindent \textbf{MSC:} 35B40;35C20;35Q55
\section{Introduction}
This work is a continuation of the study by Ao, Lv, and Wang \cite{awl}, extending their previous results to cover the full range of 
Sobolev subcritical  index.

\subsection{Background}

We investigate the limiting behavior of a class of positive solutions to the nonlinear Schr\"{o}dinger equations
\begin{equation}\label{eqn with epsilon}
		-\varepsilon^{2} \Delta u_{\varepsilon}+u_{\varepsilon}=u_{\varepsilon}^{p} \quad
		\text{in} \ \mathbb{R}^{n},
	\end{equation}
	as $\varepsilon\rightarrow 0$, where $1<p<\frac{(n+2)}{(n-2)_+}$ is Sobolev subcritical.
Equation (\ref{eqn with epsilon}) arises as the standing wave problem for the standard nonlinear Schr\"odinger equation and  the study of models in the biological theory of pattern formation \cite{T1952}, such as the Gray-Scott or Gierer-Meinhardt system \cite{GM1972, GS1983}.

By rescaling
	$u(x)=u_\varepsilon(\varepsilon x),$
the problem reduces to
	\begin{equation}\label{eqn}
		- \Delta u+u=u^p, \quad   u>0 \ \ \text{in} \ \mathbb{R}^{n},
	\end{equation}
which admits a ground state solution  $W$,  that is radially symmetric, decreasing, and decays to zero at infinity  \cite{BL1983}.
 Under the assumption that $u(x)\to 0$ as $|x|\to \infty$ or $u\in H^1(\mathbb{R}^{n})$,  the solution to \eqref{eqn} is unique up to translation, see \cite{GNN1981} and \cite{K1989}. However, much less is known about the solutions  that do not vanish at infinity.

 An important class of such solutions is the \emph{partially constrained solutions},
  vanishing at infinity in all but one direction, and we refer to them as solutions with a single
bump line.
   A simple example is the positive radial solution $W(x^\prime)$ in lower dimensions, which can be trivially extended to higher dimensions as $u(x^\prime,x_n)=W(x^\prime)$ .
The second one is the periodic Dancer solutions \cite{D2001}, which are nontrivially periodic in $x_n$ with a large period, these solutions have infinitely many bump points.

Besides partially constrained solutions, multiple-end solutions have been constructed. In particular, del Pino, Kowalczyk, Pacard, and Wei \cite{dKPW2010} used the Dancer solution to build solutions with multiple bump lines, whose locations are determined by a one-dimensional Toda system. Malchiodi \cite{M2009} constructed multiple-end solutions using the half Dancer solution, while Santra and Wei \cite{SW2013} obtained solutions combining one front with infinitely many bump points.

These solutions have natural counterparts in the geometry of constant mean curvature surfaces in $\mathbb{R}^3$: the radial solution corresponds to a sphere, the single bump line to a cylinder, the Dancer solution to a periodic Delaunay surface, and the multiple bump line solution to a  constant mean curvature surface with multiple Delaunay ends. Similarly, the coexistence of a front and bumps corresponds to a triunduloid.

 A natural question raised by Wei \cite{W2010} asks \emph{ whether all positive, partially constrained solutions are periodic in the remaining direction}; partial results exist \cite{GMX2011, PV2017, PVa2020, PV2020, PV2022}, but a complete answer to this question is still unknown.  For more background, we refer to \cite{awl} and the references therein.

\subsection{Main result}
In this paper, we focus on solutions with infinitely many bump points. Rather than addressing existence, we investigate the limiting positions of these bumps as $\varepsilon\to 0$, under natural assumptions mentioned in \cite{awl}.  More presicely, the solutions $u_\varepsilon$ satisfy:

\begin{description}
    \item[Peak structure] There exist infinitely many points $\{\xi_{\alpha,0}\} \subset \mathbb{R}^{n}$ such that
    \[
        \inf_{\alpha\neq\beta}|\xi_{\alpha,0}-\xi_{\beta,0}|>0,
    \]
    and a large constant $K^*$ with
    \[
        u_\varepsilon < 1/2 \quad \text{in } \mathbb{R}^{n} \setminus \bigcup_\alpha B_{K^*\varepsilon}(\xi_{\alpha,0}).
    \]
\end{description}

\begin{description}
    \item[Uniform condition] Defining
    \[
        \rho_\alpha := \min_{\beta\neq\alpha} |\xi_{\alpha,0}-\xi_{\beta,0}|,
    \]
    there exists $C_0\geq 1$ such that
    \[
        \frac{1}{C_0} \le \frac{\rho_\alpha}{\rho_\beta} \le C_0, \quad \text{for all } \alpha\neq\beta.
    \]
\end{description}

\medskip

Our main result is as follows:

\begin{theorem}\label{main result}
Under the above hypotheses, for any $\alpha\in\mathbb{Z}$, the  points $\{\xi_{\alpha,0}\}$ satisfy
\begin{equation}\label{limiting system}
    \sum_{\beta\neq\alpha} \ell_{\alpha\beta} \frac{\xi_{\beta,0}-\xi_{\alpha,0}}{|\xi_{\beta,0}-\xi_{\alpha,0}|} = 0,
\end{equation}
for some $\ell_{\alpha\beta}\ge 0$ satisfy
$
    \sum_{\beta\neq\alpha} \ell_{\alpha\beta} = 1,
$
and $\ell_{\alpha\beta}=0$ if $\beta\notin \mathcal{N}_\alpha$, where
\[
    \mathcal{N}_\alpha := \left\{\beta : |\xi_{\beta,0}-\xi_{\alpha,0}| = \min_{\gamma\neq \alpha} |\xi_{\gamma,0}-\xi_{\alpha,0}| \right\}.
\]
\end{theorem}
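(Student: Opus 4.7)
The plan is to derive \eqref{limiting system} as a force-balance identity, obtained by testing the rescaled equation against $\partial_i u$ on domains that isolate each peak. After the change of variable $y = x/\varepsilon$ we work on $-\Delta u + u = u^p$ with peaks $\xi_\alpha := \xi_{\alpha,0}/\varepsilon$, whose pairwise distances are of order $\rho_\alpha/\varepsilon \to \infty$; the uniform condition guarantees that all relevant distances around a given peak are comparable. A preliminary step is to upgrade the peak-structure hypothesis to a pointwise ansatz: near each peak the solution is close to $\sum_\beta W(\cdot - \xi_\beta)$, with error strictly smaller than the bump interaction. This is the role of the \emph{reverse Lyapunov--Schmidt reduction} initiated in \cite{awl}: given a solution with prescribed bump structure, one reconstructs the effective profile and estimates the remainder.

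The core is a Pohozaev-type identity. Writing the equation in divergence form $\partial_j T_{ji} = 0$ for the stress--energy tensor
\[
T_{ji} = -\partial_j u\,\partial_i u + \delta_{ji}\Bigl(\tfrac{1}{2}|\nabla u|^2 + \tfrac{1}{2}u^2 - \tfrac{1}{p+1}u^{p+1}\Bigr),
\]
and integrating over a Voronoi-type domain $\Omega_\alpha = \{y:\ |y-\xi_\alpha| \le |y-\xi_\beta|\ \forall\,\beta\ne\alpha\}$ (smoothed if needed) gives $\int_{\partial\Omega_\alpha} T_{ji}\nu_j\,d\sigma = 0$. On each face $\Gamma_{\alpha\beta}$ separating $\xi_\alpha$ from a neighbor $\xi_\beta$, the ansatz shows the integrand is driven, at leading order, by the interaction of $W(\cdot-\xi_\alpha)$ and $W(\cdot-\xi_\beta)$. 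Using the asymptotics $W(y) \sim c_n|y|^{-(n-1)/2}e^{-|y|}$, a Laplace-type argument localizes the integral to the midpoint of the segment $[\xi_\alpha,\xi_\beta]$ and produces an interaction force
\[
F_{\alpha\beta} \;=\; \mathcal{I}(|\xi_\beta - \xi_\alpha|)\,\frac{\xi_\beta - \xi_\alpha}{|\xi_\beta - \xi_\alpha|} \;+\; o\bigl(\mathcal{I}(|\xi_\beta - \xi_\alpha|)\bigr),
\]
with $\mathcal{I}(d) > 0$ of order $d^{-(n-1)/2}e^{-d}$; faces corresponding to non-nearest neighbors contribute exponentially smaller terms by the uniform condition.

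Normalizing by the maximum interaction size at $\alpha$ and extracting a subsequential limit, the face weights converge to nonnegative numbers $\ell_{\alpha\beta}$ summing to $1$, with $\ell_{\alpha\beta} = 0$ outside $\mathcal{N}_\alpha$; a diagonal argument handles the countably many indices. Passing to the limit in the force-balance identity then yields \eqref{limiting system}.

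The main obstacle is the regime $1 < p < 2$. Because $t \mapsto t^{p-1}$ fails to be Lipschitz at $t = 0$, the natural Taylor expansion $\bigl(\sum_\beta W_\beta\bigr)^p \approx \sum_\beta W_\beta^p + p\sum_{\beta\ne\gamma} W_\beta^{p-1} W_\gamma$ breaks down precisely in the overlap regions where all bumps are small, and the reduction error cannot be controlled by the arguments of \cite{awl}, which exploit the local $C^1$ character of $t \mapsto t^p$. The refinement required is a weighted pointwise expansion of $u^{p+1}$ on $\partial\Omega_\alpha$ whose leading part carries the correct directional structure and whose remainder is genuinely of lower order than $\mathcal{I}(\rho_\alpha/\varepsilon)$; this is the technical heart of the proof and the reason the range $1 < p < 2$ was previously out of reach.
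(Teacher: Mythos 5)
Your overall skeleton (isolate each peak, derive a force-balance among nearest neighbours, normalize and pass to a subsequential limit) matches the structure of the theorem, and a Pohozaev/stress-tensor identity on Voronoi cells is a legitimate alternative to what the paper actually does (the paper instead tests the error equation $(-\Delta+1)\phi-pW_\alpha^{p-1}\phi=\mathcal{I}+N(\phi)+p(\sigma^{p-1}-W_\alpha^{p-1})\phi$ against $\nabla W_\alpha$ on $B_{D_\alpha/2}(\xi_\alpha)$, extracting the balance from $\int \mathcal{I}\,\nabla W_\alpha$). But your proposal has a genuine gap: the entire analytic content is asserted rather than proved. The ``preliminary step'' in which you upgrade the peak-structure hypothesis to $u=\sum_\beta W(\cdot-\xi_\beta)+\phi$ with $\phi$ \emph{strictly smaller than the bump interaction} is exactly the hard part of the paper (the orthogonal decomposition plus the $C^1$ bound $\|\phi\|_{C^1(\Omega_\alpha)}\lesssim e^{-\frac{(1-\tau)D_\alpha}{2}-\frac{(p-1)D}{2}}$, with $\tau$ chosen small depending on $p$ and $C_0$), and it is precisely the step whose proof in \cite{awl} breaks down for $1<p<2$. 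You acknowledge this, but then defer the fix to an unspecified ``weighted pointwise expansion of $u^{p+1}$ on $\partial\Omega_\alpha$'' described as ``the technical heart of the proof.'' A proof that names its own missing core is not a proof; without a quantitative error estimate your Pohozaev boundary integral on the face $\Gamma_{\alpha\beta}$ cannot be evaluated, because there $u$, $W_\alpha$, $W_\beta$ are all of size $e^{-d/2}$ ($d=|\xi_\alpha-\xi_\beta|$), the stress tensor is quadratic, and the cross terms $W_\alpha\phi$, $|\nabla\phi|^2$ must be shown to be $o(d^{-(n-1)/2}e^{-d})$ --- which is the same estimate you have not supplied.

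Moreover, the remedy you sketch points in the wrong direction relative to the actual difficulty. The paper's point for $1<p<2$ is that pointwise expansions of the non-Lipschitz nonlinearity are too weak, and the resolution is to work with \emph{integral} estimates: splitting $\int\mathcal{I}\,\nabla W_\alpha$ into the regions $\{W_\alpha>\sigma_\alpha\}$ and $\{W_\alpha\le\sigma_\alpha\}$ (the four pieces $J_1,\dots,J_4$ of Lemma 4.1), interpolating with small powers $W_\beta^{p-\theta}W_\alpha^{1+\theta}$ etc., and treating the terms $N(\phi)$ and $(\sigma^{p-1}-W_\alpha^{p-1})\phi$ by case analysis under the integral, where bounds like $\sigma_\alpha|\phi|^{p-1}W_\alpha$ and $|\phi|^{p+1}$ can be summed against the volume factor $D_\alpha^n$. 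Proposing a refined \emph{pointwise} expansion on $\partial\Omega_\alpha$ reinstates exactly the obstruction (failure of Lipschitz continuity of $t\mapsto t^{p-1}$ at $t=0$ in the overlap region where all bumps are comparable and small) that makes the range $1<p<2$ hard, and you give no mechanism for overcoming it. So while the Voronoi--Pohozaev route could in principle be carried out once the paper's error estimate and integral-type interaction expansions are in hand, as written your argument omits both, and these are the substance of the theorem.
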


\begin{remark}
Equation \eqref{limiting system} describes an interacting particle system where each particle interacts with neighboring ones. Note that this is not strictly nearest-neighbor interaction, since $\ell_{\alpha\beta}$ may vanish even if $\beta \in \mathcal{N}_\alpha$.
\end{remark}

The case $p\ge 2$  was treated in \cite{awl}, and here we focus on the range  $1<p<2$. Our estimates can be adapted with minor modifications to cover $p\ge 2$.
For $p\geq 2$, the key idea in \cite{awl} for proving Theorem \ref{main result} via the reverse Lyapunov–Schmidt reduction method, inspired by the analysis in \cite{WW2019}, can be roughly summarized as follows:

\begin{enumerate}
  \item Construct an optimal approximation of the original solution $u_\varepsilon$  in the form
    \[u_\varepsilon(x)=\sum_{\alpha\in\mathbb{Z}}W\left(\frac{x-\xi_{\alpha,\varepsilon}}{\varepsilon}\right)(x)+\phi_\varepsilon(x),\]
    where $\phi_\varepsilon$ denotes the \emph{error function} satisfying an appropriate orthogonality condition, and
    $$\lim_{\varepsilon\to0}\frac{|\xi_{\alpha,\varepsilon}-\xi_{\alpha,0}|}{\varepsilon}=0,$$
    see Proposition \ref{prop orthogonal decomposition} below.
  \item Using the nondegeneracy of $W$ together with the orthogonality condition, derive an almost optimal pointwise estimate for the error function $\phi_\varepsilon$. In particular, the leading-order term of $\phi_\varepsilon$ arises from the interactions among different peaks.
  \item Analyze the equation satisfied by the error function $\phi_\varepsilon$ pointwise in each subdomain determined by $\xi_{\alpha,\varepsilon}$.  By performing a suitable translation and extracting a subsequence, they compute the leading-order term of the error equation in the limit $\varepsilon \to 0$,   obtaining a limiting equation. Testing this limiting equation against $\nabla W$ then  completes the proof.
\end{enumerate}

Our proof strategy is similar to that in \cite{awl}, the main difference is that, for $1<p<2$,
 the pointwise estimates from the second step that are too weak to yield a limiting equation. To overcome this difficulty,
we adopt the integral-estimate approach of \cite{liao}, testing the error equation itself rather than the limiting equation, and perform refined computations of the integral contributions from the linear term, the nonlinear term, and the interaction term. The detailed estimates are presented in Section 4.
\medskip

Return to partially constrained solutions, for  solutions of the form
\[
u_\varepsilon(x',x_n) = \sum_{\alpha\in\mathbb{Z}} W\Big(\frac{x-\xi_{\alpha,\varepsilon} e_n}{\varepsilon}\Big) + o(1)
\]
with $\xi_{\alpha,\varepsilon}\to \xi_\alpha$ as $\varepsilon\to 0$, and
\[
\cdots < \xi_{\alpha-1} < \xi_\alpha < \xi_{\alpha+1} < \cdots,
\]
an application of Theorem \ref{main result} yields
\[
\xi_\alpha = \frac{\xi_{\alpha-1} + \xi_{\alpha+1}}{2}, \quad \text{for all } \alpha\in\mathbb{Z},
\]
so that the limiting peak points are periodically distributed along the $x_n$-axis. Thus, our results give certain evidence in support of Wei's conjecture. 

\medskip

{\bf Notation:} In this paper, we use $C$  to denote universal constants independent of $\varepsilon$. They could vary from line to line. When comparing two quantities $A$ and $B$, if $A\leq CB$ for some universal constant $C$, we denote it as $A\lesssim B$.
	
	\section{Preliminaries}\label{sec preliminary}
	\setcounter{equation}{0}
\subsection{The property of ground state}
It is well known that for the ground state $W$, there exists a constant $C(n)>0$ such that
    \begin{equation}\label{infi}
      \left|W(x)- C(n)|x|^{-\frac{n-1}{2}} e^{-|x|}\right| \lesssim |x|^{-\frac{n+1}{2}} e^{-|x|}.
    \end{equation}
Moreover, for its gradient, one has $|\nabla W|\lesssim W$ and
    \begin{equation}\label{infi2}
    \left|\nabla W(x)- C(n)\frac{x}{|x|}|x|^{-\frac{n-1}{2}} e^{-|x|} \right| \lesssim |x|^{-\frac{n+1}{2}} e^{-|x|}.
    \end{equation}
As a consequence of \eqref{infi} and \eqref{infi2}, we have the following useful estimates:
\begin{lemma}\label{esti}
Let $y\in \mathbb{R}^n$ with $|y|>1$.  Then the following hold:
\begin{enumerate} 
  \item For all $x\in\mathbb{R}^n$,
  \begin{equation}\label{es1}
W(x) W(x+y)\lesssim |y|^{-\frac{(n-1)}{2}} e^{-|y|} .
\end{equation}
  \item For $ \alpha>\beta>0$,
\begin{equation}\label{es2}
\int_{\mathbb{R}^n} W^{\alpha}(x)  W^{\beta}(x+y)\,\mathrm{d}x \approx |y|^{-\frac{\beta(n-1)}{2}} e^{-\beta|y|}.
\end{equation}
  \item For $ p>1$, there exists a constant $\bar{c}>0$ such that
\begin{equation}\label{es3}
\left|\int_{\mathbb{R}^n} W^{p-1}(x)  \nabla W(x) W(x+y)\,\mathrm{d}x-\bar{c}\frac{y}{|y|}|y|^{-\frac{n-1}{2}}e^{-|y|} \right| \lesssim |y|^{-\frac{n+1}{2}}e^{-|y|}.
\end{equation}
\end{enumerate}

\end{lemma}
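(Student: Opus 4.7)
The plan is to handle the three parts in sequence, relying on the uniform decay bound $W(z)\lesssim (1+|z|)^{-(n-1)/2}e^{-|z|}$ (a consequence of \eqref{infi} together with the boundedness of $W$).

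Part (1) follows from multiplying the decay bounds for $W(x)$ and $W(x+y)$, extracting $e^{-|y|}$ via $|x|+|x+y|\geq|y|$, and controlling the polynomial factor through the elementary inequality
\[
(1+|x|)(1+|x+y|)\geq 1+|x|+|x+y|\geq 1+|y|.
\]
For the upper bound in (2) I split $\mathbb{R}^n$ at $|x|=|y|/2$. The algebraic identity
\[
\alpha|x|+\beta|x+y|=\beta(|x|+|x+y|)+(\alpha-\beta)|x|\geq \beta|y|+(\alpha-\beta)|x|
\]
extracts $e^{-\beta|y|}$; the inner region then contributes an $|x|$-convergent integral with the polynomial pulled out as $(1+|x+y|)^{-\beta(n-1)/2}\lesssim |y|^{-\beta(n-1)/2}$, while in the outer region the extra exponential factor $e^{-(\alpha-\beta)|y|/2}$ dominates any polynomial loss. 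The matching lower bound comes from restricting to $|x|\leq 1$, where $W^{\alpha}(x)\gtrsim 1$ and \eqref{infi} gives $W^{\beta}(x+y)\gtrsim |y|^{-\beta(n-1)/2}e^{-\beta|y|}$.

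For (3) I begin with the integration by parts
\[
\int_{\mathbb{R}^n}W^{p-1}(x)\nabla W(x)\,W(x+y)\,\mathrm{d}x=-\frac{1}{p}\int_{\mathbb{R}^n} W^p(x)\,\nabla W(x+y)\,\mathrm{d}x,
\]
and substitute the expansion of $\nabla W$ from \eqref{infi2}. The remainder contributes at most $\int W^p(x)|x+y|^{-(n+1)/2}e^{-|x+y|}\,\mathrm{d}x \lesssim |y|^{-(n+1)/2}e^{-|y|}$ by exactly the argument of (2). In the leading piece I split again at $|x|=|y|/2$: the outer region is exponentially smaller than the claimed bound because $p>1$ forces an extra gain $e^{-(p-1)|y|/2}$. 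In the inner region, the expansions
\[
|x+y|=|y|+\hat y\cdot x+O(|x|^2/|y|),\qquad \widehat{x+y}=\hat y+O(|x|/|y|),
\]
together with $|x+y|^{-(n-1)/2}=|y|^{-(n-1)/2}\bigl(1+O(|x|/|y|)\bigr)$, reduce the main contribution to
\[
\hat y\,|y|^{-(n-1)/2}e^{-|y|}\int_{\mathbb{R}^n} W^p(x)\,e^{-\hat y\cdot x}\,\mathrm{d}x,
\]
plus correction integrals carrying extra factors $|x|/|y|$ or $|x|^2/|y|$ which are absorbed by the exponential decay of $W^p$. The integral above is independent of $\hat y$ by the radial symmetry of $W$, yielding the claimed constant $\bar c$ (positive, once the sign of $C(n)$ in \eqref{infi2} is tracked, since $W$ is radially decreasing).

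The main technical obstacle lies in ensuring the inner-region Taylor remainders in (3) are genuinely of size $|y|^{-(n+1)/2}e^{-|y|}$ rather than something larger. The correction $|x|^2/|y|$ is small only when $|x|\ll\sqrt{|y|}$, so I anticipate a further splitting of the inner region at, say, $|x|\leq\tfrac{1}{10}\sqrt{|y|}$; beyond this cutoff the factor $W^p(x)e^{-\hat y\cdot x}$ decays exponentially and the tail can be controlled crudely, while on $|x|\leq\tfrac{1}{10}\sqrt{|y|}$ each correction integrates against $W^p(x)e^{-\hat y\cdot x}$ to a finite constant times $|y|^{-1}$, delivering the required sharpness.
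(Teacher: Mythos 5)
Your argument is correct in all three parts, but note that the paper does not actually prove this lemma in the text: its ``proof'' is the single line ``See Section 3.1 in \cite{liao}.'' So what you have done is supply the self-contained derivation that the paper outsources, and your route is the standard one that such interaction estimates always follow (and almost certainly the one in the cited reference): pointwise decay plus the triangle inequality for \eqref{es1}; for \eqref{es2} the split at $|x|=|y|/2$ with the identity $\alpha|x|+\beta|x+y|\ge \beta|y|+(\alpha-\beta)|x|$ for the upper bound (keeping a fraction of $(\alpha-\beta)|x|$ for integrability in the outer region) and restriction to $|x|\le 1$ for the lower bound; for \eqref{es3} integration by parts onto $W^p$, substitution of the asymptotics of $\nabla W$, and a Taylor expansion of $|x+y|$ and $\widehat{x+y}$ in the inner region, with the extra cutoff at $|x|\le\frac{1}{10}\sqrt{|y|}$ that you correctly anticipate being exactly what legitimizes the $O(|x|^2/|y|)$ correction; the limit constant $\bar c=\mathrm{const}\cdot\int W^p(x)e^{-\hat y\cdot x}\,\mathrm{d}x$ is $\hat y$-independent by radial symmetry, and your remark that its positivity is dictated by $W$ being radially decreasing correctly handles the sign convention in \eqref{infi2}. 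Two small points you should make explicit when writing this up: (i) in bounding the remainder $\int W^p(x)\,|x+y|^{-\frac{n+1}{2}}e^{-|x+y|}\,\mathrm{d}x$ ``by the argument of (2)'', the weight is singular at $x=-y$ (and not locally integrable there when $n=1$), so near $x=-y$ one should instead use the crude bounds $|\nabla W(x+y)|\lesssim 1$ and $W^p(x)\lesssim e^{-p|y|}$, which is already $o\bigl(|y|^{-\frac{n+1}{2}}e^{-|y|}\bigr)$ since $p>1$; (ii) the two-sided bound in \eqref{es2} for $|y|$ in a bounded range $(1,R_0]$ follows from continuity and positivity of both sides rather than from the asymptotics \eqref{infi}, which are only effective when $|x+y|$ is large. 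With these routine additions your sketch closes completely.
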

\begin{proof}
  See Section 3.1 in \cite{liao}.
\end{proof}

	Denote $Z_i:=\partial_{x_i}W$, $i=1,\dots, n$. These are solutions to the linearized equation
	\begin{equation}\label{linearized eqn around W}
		-\Delta \phi+\phi=pW^{p-1}\phi \quad \text{in} ~ \mathbb{R}^{n}.
	\end{equation}
	A well-known property of $W$ is its nondegeneracy (see \cite[ Lemma 13.4]{WW2014}), which is stated as follows.
	\begin{lemma}[Nondegeneracy]\label{nondegeneracy}
		If $\phi\in L^\infty(\mathbb{R}^{n})$ solves the linearized equation \eqref{linearized eqn around W}, then there exist constants $c_1$, $\dots$, $c_n$ such that
		\[\phi=\sum_{i=1}^{n}c_i Z_i.\]
	\end{lemma}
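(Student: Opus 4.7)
The plan is to exploit the radial symmetry of $W$ by decomposing $\phi$ into spherical harmonics on $S^{n-1}$. Writing $\phi(x) = \sum_{k \geq 0}\sum_{m} \phi_{k,m}(r)\, Y_{k,m}(x/|x|)$ with $r = |x|$, and using that $-\Delta_{S^{n-1}}$ has eigenvalues $k(k+n-2)$ on degree-$k$ harmonics, equation \eqref{linearized eqn around W} reduces to a family of radial ODEs
\[
-\phi_k'' - \frac{n-1}{r}\phi_k' + \Big(1 + \frac{k(k+n-2)}{r^2} - pW^{p-1}(r)\Big)\phi_k = 0,
\]
with $\phi_k \in L^\infty([0,\infty))$. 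Since the effective potential tends to $1$ at infinity, standard ODE theory gives that bounded solutions decay exponentially, which justifies the integration-by-parts identities used below.

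For $k = 1$, differentiating the ground state equation $-\Delta W + W = W^p$ in $x_i$ shows that $W'(r)$ is a bounded solution of the ODE; Frobenius analysis at the origin forces the second linearly independent solution to behave like $r^{-(n-1)}$ there, hence to be unbounded, so every bounded $\phi_1$ is a scalar multiple of $W'(r)$. Reassembling with the $n$ independent degree-one spherical harmonics $x_i/|x|$ recovers exactly $\mathrm{span}\{Z_1,\ldots,Z_n\}$.

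For $k \geq 2$, I would perform a weighted Sturm-type comparison against the $k=1$ operator. Because $W'(r) < 0$ on $(0,\infty)$ lies in the kernel for $k=1$, that operator is nonnegative on the radial class. For $k \geq 2$ the extra potential $[k(k+n-2)-(n-1)]/r^2$ is strictly positive, so writing $\phi_k = v(r)\,W'(r)$ on $(0,\infty)$ and integrating by parts against the weight $r^{n-1}$ yields an identity of the form
\[
\int_0^\infty (W')^2 (v')^2\, r^{n-1}\, dr + \int_0^\infty \frac{k(k+n-2)-(n-1)}{r^2}\,(W')^2 v^2\, r^{n-1}\, dr = 0,
\]
which forces $v \equiv 0$, hence $\phi_k \equiv 0$.

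The principal obstacle is the radial mode $k = 0$, where no sign-changing kernel element is available and the comparison argument breaks down. Here the plan is to invoke the uniqueness theorem of \cite{K1989} for positive, radial, decaying solutions of $-\Delta u + u = u^p$: combined with ODE phase-plane/shooting analysis, this uniqueness implies that a nontrivial bounded radial element of the kernel would generate a one-parameter deformation of distinct positive radial ground states, a contradiction. Assembling the three cases then gives $\phi = \sum_{i=1}^n c_i Z_i$, as claimed.
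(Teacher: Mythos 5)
The paper itself does not prove this lemma; it simply cites \cite[Lemma 13.4]{WW2014}, and your sketch reconstructs the standard spherical-harmonics argument that lies behind that citation. The $k=1$ mode and the $k\ge 2$ modes are handled correctly in outline: for $k=1$ the Frobenius exponents at the origin are $1$ and $-(n-1)$, so boundedness singles out the multiple of $W'$; for $k\ge 2$ the substitution $\phi_k=vW'$ (legitimate since $W'<0$ on $(0,\infty)$) together with the extra potential $[k(k+n-2)-(n-1)]/r^2>0$ gives the stated positivity identity. Two small points you should still verify there: the vanishing of the boundary terms at $r=0$ uses that a \emph{bounded} mode-$k$ solution behaves like $r^{k}$ (so $v\sim r^{k-1}$ is regular, not merely that $\phi_k$ is bounded), and at $r=\infty$ one needs the comparison of the decay rates of $\phi_k$ and $W'$, both of order $r^{-(n-1)/2}e^{-r}$, to control $v$ and $v'$.

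The genuine gap is the radial mode $k=0$. The implication you invoke --- ``a nontrivial bounded radial kernel element would generate a one-parameter family of distinct positive ground states, contradicting Kwong's uniqueness'' --- is not valid as stated: the existence of a kernel element does \emph{not} produce a curve of nearby solutions; constructing such a curve is exactly what the implicit function theorem cannot do at a degenerate solution, so the argument is circular. Uniqueness and nondegeneracy are logically independent statements, and radial nondegeneracy is the content of the finer ODE analysis in \cite{K1989} (monotonicity of the shooting map, equivalently the fact that the solution of the linearized radial equation which is regular at the origin changes sign exactly once and grows exponentially, hence is never bounded), not a corollary of the uniqueness statement alone. To close the step you should either quote that radial nondegeneracy directly (it is part of \cite[Lemma 13.4]{WW2014} and of Kwong's analysis), or run an actual argument, e.g.\ using the scaling generator $V:=\tfrac{2}{p-1}W+r\partial_r W$, which satisfies $-\Delta V+V-pW^{p-1}V=-2W$, together with Wronskian/sign-change considerations for the radial linearized ODE. (A further, minor, remark: for $n=1$ there is no spherical-harmonics decomposition, and the even part of $\phi$ requires the same radial-mode argument, so the $k=0$ issue cannot be avoided in any dimension.)
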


 The next proposition, which is \cite[Proposition 6.1]{AMPW2016}, is about the a priori estimate for the linearized problem. Given $\delta>0$, define the weighted norm as
	\[\|\phi\|_{\delta}:=\sup_{x\in \mathbb{R}^{n}}e^{\delta|x|}\left|\phi(x) \right|. \]
	
 \begin{proposition}[A priori estimate]\label{prop prior estimate}
		For any $f\in L^{\infty}(\mathbb{R}^{n})$ satisfying $\|f\|_\delta<\infty$, there exists a unique $\phi\in L^{\infty}(\mathbb{R}^{n})$ and $c\in \mathbb{R}^{n}$ solving
		\begin{equation}\label{eqn for linear operator}
			\Delta \phi-\phi+pW^{p-1}\phi+ c\cdot\nabla W =f \quad \text{in} \ \mathbb{R}^{n},
		\end{equation}
		and
		\[\int_{\mathbb{R}^{n}}\phi Z_i\,\mathrm{d}x=0, \quad i=1,\cdots,n.\]
Moreover,
		\[\|\phi\|_{\delta} +|c|\leq C\|f\|_{\delta}, \]
  where $C>0$ is a constant that does not depend on $f$.
	\end{proposition}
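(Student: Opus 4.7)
The plan is to prove uniqueness first, then existence via Fredholm theory, and finally the quantitative weighted bound by a contradiction/blow-up argument combined with an exterior barrier comparison.

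For uniqueness, I would suppose $(\phi,c)$ solves the homogeneous problem and test against $Z_j=\partial_j W$. Since $Z_j$ itself solves the linearized equation \eqref{linearized eqn around W}, integration by parts cancels all $\phi$-contributions, leaving
\[
c\cdot \int_{\mathbb{R}^n}\nabla W\,Z_j\,\mathrm{d}x=0.
\]
The Gram matrix $M_{ij}=\int \partial_i W\,\partial_j W\,\mathrm{d}x$ equals $\|\partial_1 W\|_{L^2}^2\,\delta_{ij}$ by the radial symmetry of $W$, so $c=0$. Then $\phi$ is a bounded solution of \eqref{linearized eqn around W}; Lemma \ref{nondegeneracy} yields $\phi=\sum_i c_i Z_i$, and the orthogonality conditions $\int \phi Z_i=0$ together with the positive-definiteness of $M$ force all $c_i=0$.

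For existence, I would work in the Hilbert space $H=\{\phi\in H^1(\mathbb{R}^n):\int \phi Z_i\,\mathrm{d}x=0\}$ and rewrite the equation as a compact perturbation $(I-K)\phi=g$ of the identity, with the parameter $c$ determined by imposing that the right-hand side lie in $H$ (i.e.\ $c$ is chosen so that the $Z_j$-projections of the equation vanish, which is a solvable $n\times n$ linear system by the nondegeneracy of $M$). The Fredholm alternative then reduces existence to the uniqueness already established.

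The weighted estimate is the main obstacle and I would prove it by contradiction. If the estimate fails, there are sequences $(\phi_k,c_k,f_k)$ with $\|\phi_k\|_\delta+|c_k|=1$ and $\|f_k\|_\delta\to 0$. By elliptic regularity and Arzel\`a--Ascoli, a subsequence converges locally uniformly to $(\phi_\infty,c_\infty)$; the pointwise bound $|\phi_\infty(x)|\le e^{-\delta|x|}$ makes $\phi_\infty$ a bounded solution of the homogeneous equation satisfying the orthogonality conditions, so by the uniqueness step $\phi_\infty\equiv 0$ and $c_\infty=0$. The remaining, and genuinely delicate, issue is to promote this local decay to the weighted norm. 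For this I would use an exterior barrier: outside a large ball $B_R$, $pW^{p-1}$ is exponentially small, and for $0<\delta<1$ a direct computation shows $\eta(x)=e^{-\delta|x|}$ satisfies $-\Delta\eta+\eta\ge (1-\delta^2)\eta/2$ for $|x|\ge R$ with $R$ large. Applying the comparison principle on $\mathbb{R}^n\setminus B_R$, with boundary values on $\partial B_R$ tending to zero by the local convergence and right-hand side controlled by a small multiple of $\eta$ (coming from $\|f_k\|_\delta+|c_k|\to 0$ and the exponential smallness of $pW^{p-1}\phi_k$ outside $B_R$), yields $\sup_{|x|\ge R}e^{\delta|x|}|\phi_k(x)|\to 0$. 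Combined with the local convergence inside $B_R$ this gives $\|\phi_k\|_\delta\to 0$, contradicting the normalization. Constructing this barrier and carefully coupling the decay of $\phi_k$ with that of $c_k$ is the technical crux.
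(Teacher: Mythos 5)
Your proposal is essentially correct; note that the paper itself does not prove this proposition but quotes it verbatim from \cite{AMPW2016} (Proposition 6.1 there), and your argument --- uniqueness by testing with $Z_j$ and invoking Lemma \ref{nondegeneracy}, existence by Fredholm theory on the orthogonally constrained space with $c$ playing the role of a Lagrange multiplier, and the weighted bound by a compactness/contradiction argument closed with an exterior supersolution $e^{-\delta|x|}$ --- is exactly the standard route by which such linear a priori estimates are established in that reference and in the broader gluing literature. Two small points to tighten in a full write-up: in the uniqueness and limit steps, justify the integration by parts and the passage to the limit in $\int\phi_k Z_i$ via local elliptic estimates and the uniform bound $|\phi_k|\le e^{-\delta|x|}$; and in the barrier step fix $R$ large first so that the term $Ce^{-(p-1)R}\|\phi_k\|_{\delta}$ can be absorbed into the left-hand side before sending $k\to\infty$, and use the same bootstrap-plus-barrier argument once more to check that the solution produced by the Fredholm alternative is indeed in $L^\infty$ with $\|\phi\|_{\delta}<\infty$, so that the a priori estimate applies to it.
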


\subsection{Profile and  orthogonal condition}
Once we have performed a rescaling $u(x)=u_\varepsilon(\varepsilon x)$, the  points $\xi_{\alpha, 0}$ are scaled to $\xi_\alpha^\ast:=\frac{\xi_{\alpha, 0}}{\varepsilon}$. It is important to note that in these notations, there should be an $\varepsilon$-dependence, but we will not make it explicit the sake of simplicity.

For each $\alpha\in\mathbb{Z}$, we denote
	\begin{equation}\label{D_alpha}
A:=\{\xi_{\alpha}^*\},\quad
D_\alpha:=\min_{\beta\neq\alpha}|\xi_\alpha^\ast -\xi_\beta^\ast|=\frac{\rho_{\alpha}}{\varepsilon},\quad D:=\inf_{\alpha}D_\alpha . 
\end{equation}
We point out that as $$D\to \infty, \mbox{ as } \varepsilon \to 0.$$
In the following, we fix a cut-off function $\eta\in C_0^\infty(B_2)$, satisfying
	\begin{itemize}
		\item $0\leq \eta \leq 1$ and $\eta\equiv 1$ in $B_1$;
		\item $|\nabla\eta|+|\nabla^2\eta|\leq C$.
	\end{itemize}
	We also take a large constant $K$  , and denote
	\[\eta_K(x):=\eta\left(\frac{x}{K}\right).\]

The following results can be found in \cite{awl}.
	\begin{proposition}\label{prop orthogonal decomposition}
Under the hypotheses, we have the following decay estimate:
\begin{equation}\label{decay}
u(x)\leq Ce^{-\frac{d(x,A)}{C}}.\end{equation}
Moreover, for any fixed  $K>1000$  and $\varepsilon$ small enough,
 there exists a sequence of points $\xi_{\alpha}\in\mathbb{R}^{n}$  (with $\varepsilon$-dependence) such that $$|\xi_{\alpha}-\xi_\alpha^\ast|=o(1),$$ and for any $\alpha\in\mathbb{Z}$ and $i=1$, $\dots$, $n$,
		\begin{equation}\label{eq2.1}
			\int_{\mathbb{R}^{n}}\Big[u(x)-\sum_{\beta\in \mathbb{Z}}W(x-\xi_{\beta})\Big] \eta_K\left(x-\xi_\alpha\right)Z_{i}(x-\xi_{\alpha})\,\mathrm{d}x=0.
		\end{equation}
	\end{proposition}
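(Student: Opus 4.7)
I would split the two assertions and treat them separately.

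\emph{Decay estimate.} Rewrite \eqref{eqn} in the form $-\Delta u + (1-u^{p-1})u = 0$. After the rescaling $u(x)=u_\varepsilon(\varepsilon x)$, the peak-structure hypothesis gives $u < 1/2$ on $\Omega := \mathbb{R}^n\setminus \bigcup_\alpha B_{K^*}(\xi_\alpha^*)$, so the coefficient $1-u^{p-1}$ is bounded below by $c_0 := 1-(1/2)^{p-1}>0$ throughout $\Omega$ (note $p>1$ is essential). I would then compare $u$ on $\Omega$ against the barrier $v(x) := C\sum_\alpha e^{-c|x-\xi_\alpha^*|}$, with $c>0$ small enough that $-\Delta v + c_0 v \geq 0$ and $C$ large enough that $v \geq u$ on $\partial \Omega$ (uniform $L^\infty$-bounds on $u$ from standard elliptic estimates for \eqref{eqn} provide this boundary data). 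The maximum principle gives $u\leq v$ on $\Omega$, and the uniform condition on $\rho_\alpha$ makes $v$ comparable to a single exponential in $d(x,A)$, yielding \eqref{decay}.

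\emph{Orthogonal decomposition.} This is a reverse Lyapunov--Schmidt / fixed-point argument. Writing $\xi_\beta = \xi_\beta^* + \tau_\beta$ and viewing $\tau = (\tau_\beta)_\beta$ as an element of $X := \ell^\infty(\mathbb{Z};\mathbb{R}^n)$, I would define
\[
F_\alpha^i(\tau) := \int_{\mathbb{R}^n} \Big[u(x) - \sum_{\beta\in\mathbb{Z}} W(x-\xi_\beta^*-\tau_\beta)\Big]\,\eta_K(x-\xi_\alpha^*-\tau_\alpha)\, Z_i(x-\xi_\alpha^*-\tau_\alpha)\,\mathrm{d}x,
\]
and seek a small $\tau$ with $F(\tau)=0$. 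At $\tau=0$ the test function is supported in $B_{2K}(\xi_\alpha^*)$. The peak structure forces $u$ to be close to a standard bump near each $\xi_\alpha^*$, and Part~1 bounds the remaining tail exponentially in $D$; a standard elliptic compactness argument around each peak therefore gives $\sup_{\alpha,i}|F_\alpha^i(0)| = o(1)$ as $\varepsilon\to 0$, uniformly in $\alpha$.

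For the linearization, differentiating in $\tau_\beta$ and using that $\eta_K\equiv 1$ on $B_K$ together with the exponential decay of $Z_i$ shows that the diagonal block $\partial_{\tau_\alpha^j}F_\alpha^i(0)$ coincides, up to $o(1)$, with the constant-diagonal matrix $-\int_{\mathbb{R}^n} Z_i(x)Z_j(x)\,\mathrm{d}x = -c_*\delta_{ij}$, which is invertible by the nondegeneracy of $W$ (Lemma~\ref{nondegeneracy}). The off-diagonal blocks $\partial_{\tau_\beta^j}F_\alpha^i(0)$ with $\beta\neq\alpha$ are controlled by the interaction integrals of Lemma~\ref{esti}, hence are of size $O(e^{-(1-o(1))|\xi_\alpha^*-\xi_\beta^*|})$; the uniform condition $\rho_\beta/\rho_\alpha\leq C_0$ makes these summable over $\beta$ with total contribution $o(1)$. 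Therefore $DF(0):X\to X$ is bounded, diagonally dominated, and invertible with bounded inverse independent of $\varepsilon$. Applying the Banach contraction principle to $\tau\mapsto \tau - DF(0)^{-1}F(\tau)$ on a small ball of $X$ produces a unique $\tau$ with $\|\tau\|_X=o(1)$ and $F(\tau)=0$; the sought points are then $\xi_\alpha := \xi_\alpha^* + \tau_\alpha$.

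\emph{Main obstacle.} The delicate point is making the linear analysis uniform in $\alpha\in\mathbb{Z}$. All interaction estimates must be exponentially small in the separations and must sum over $\beta$ to a bound uniform in $\alpha$, so that the fixed-point contraction closes with a rate independent of $\varepsilon$. This is exactly the situation where the uniform condition on $\rho_\alpha/\rho_\beta$ and the sharp estimates \eqref{es1}--\eqref{es2} are indispensable; the bookkeeping of the doubly-indexed interaction sums is the step that requires the most care.
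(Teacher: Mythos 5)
The paper itself gives no proof of Proposition \ref{prop orthogonal decomposition}; it is imported verbatim from \cite{awl}, so your proposal has to be judged against the argument expected there. Your barrier argument for \eqref{decay} is the standard one and is fine in outline, with two small corrections: on $\partial\Omega$ the bound $u\le 1/2$ comes directly (by continuity) from the peak-structure hypothesis, so no interior $L^\infty$ bound is needed for the boundary data — and in any case a uniform bound for $u$ inside the balls does not follow from ``standard elliptic estimates'' for a nonlinear equation; it requires a blow-up plus Liouville (Gidas--Spruck type) argument using the subcriticality of $p$. Also, since $\Omega$ is unbounded, the comparison $u\le v$ should be justified by the maximum principle for bounded subsolutions of $-\Delta+c_0$, which is available because $u\le 1/2$ on $\Omega$.

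The genuine gap is in the orthogonality part: your contraction scheme silently assumes the main analytic content of the proposition, namely that near each $\xi_\alpha^*$, uniformly in $\alpha$, $u$ is $o(1)$-close to the ground state $W$ centered within $o(1)$ of $\xi_\alpha^*$. You use this twice. First, $\sup_{\alpha,i}|F_\alpha^i(0)|=o(1)$ is exactly this closeness tested against $\eta_K Z_i$; proving it needs uniform local a priori bounds (again blow-up/Liouville via subcriticality), local compactness, the decay estimate \eqref{decay} to force exponential decay of the local limit, uniqueness of $W$, an argument that the limit profile is nontrivial and centered at $\xi_\alpha^*$ up to $o(1)$ (here the interpretation of the $\xi_{\alpha,0}$ as asymptotic peak locations is essential — the literal peak-structure hypothesis only fixes them up to $O(K^*\varepsilon)$, so without this step the conclusion $|\xi_\alpha-\xi_\alpha^*|=o(1)$ can fail), and a contradiction/diagonal argument to make all of this uniform in $\alpha\in\mathbb{Z}$. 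Second, and less obviously, your linearization is incomplete: differentiating $F_\alpha^i$ in $\tau_\alpha^j$ also produces the term $\int_{\mathbb{R}^n}\big[u-\sum_\beta W(\cdot-\xi_\beta^*)\big]\,\partial_j\big(\eta_K Z_i\big)(\cdot-\xi_\alpha^*)\,\mathrm{d}x$ coming from the shifted cutoff and kernel; this term is of size $\|u-\sum_\beta W(\cdot-\xi_\beta^*)\|_{L^\infty(B_{2K}(\xi_\alpha^*))}$ and is $o(1)$ only if the local closeness to the bump is already known, so without that input the claimed diagonal dominance and invertibility of $DF(0)$ are not justified (the stated diagonal value $-c_*\delta_{ij}$ also has the wrong sign, though that is harmless). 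In short, the $\ell^\infty$ fixed-point bookkeeping you single out as the main obstacle is the routine part; the missing ingredient is the uniform local convergence of $u$ to a properly centered copy of $W$, which in \cite{awl} is established before any orthogonality adjustment is attempted.
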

\begin{remark}\label{relation of xi and xi*}
In Proposition \ref{prop orthogonal decomposition}, since we have $|\xi_\alpha-\xi_\alpha^\ast|=o(1)$, so
\begin{equation*}
D_\alpha=D_\alpha^\prime+o(1),
\end{equation*}
where
\begin{equation*}
D_\alpha^\prime:=\min_{\beta\neq\alpha}|\xi_\alpha-\xi_\beta|.
\end{equation*}
In the following, by  abusing notations, we will just use $D_\alpha$ instead of $D_\alpha^\prime$.
\end{remark}

\section{Estimate on the error function}
For $\xi_\alpha$ obtained by Proposition \ref{prop orthogonal decomposition}, we 
denote
	\[W_\alpha(x):=W(x-\xi_\alpha), \quad Z_{i,\alpha}(x):=Z_i(x-\xi_\alpha), \quad \eta_{K,\alpha}(x):=\eta_K(x-\xi_\alpha)\]
	and
	\begin{equation}\label{form of error fct}
		\phi:=u-\sum_{\alpha\in\mathbb{Z}}W_\alpha,
	\end{equation}
	which is the \emph{error function}. It satisfies the \emph{error equation}
	\begin{equation}\label{error eqn}
		-\Delta\phi+\phi= \Big(\sum_{\alpha\in\mathbb{Z}}W_\alpha+\phi\Big)^p-\sum_{\alpha\in\mathbb{Z}} W_\alpha^p
	\end{equation}
	and the \emph{orthogonal condition}
	\begin{equation}\label{orthogonal condition}
		\int_{\mathbb{R}^{n}}\phi \eta_{K,\alpha} Z_{i,\alpha}\,\mathrm{d}x=0, \quad \forall i=1,\cdots, n,  ~ \alpha\in\mathbb{Z}.
	\end{equation}

	Sometimes we also need another form of \eqref{error eqn},
	\begin{equation}\label{error eqn 2}
		-\Delta \phi+\phi=p\Big(\sum_{\alpha\in\mathbb{Z}}W_\alpha\Big)^{p-1}\phi+N(\phi) +\mathcal{I},
	\end{equation}
	where
	\[N(\phi):=\Big(\sum_{\alpha\in\mathbb{Z}}W_\alpha+\phi\Big)^p -\Big(\sum_{\alpha\in\mathbb{Z}}W_\alpha\Big)^p -p\Big(\sum_{\alpha\in\mathbb{Z}}W_\alpha\Big)^{p-1}\phi\]
	is the  nonlinear term, and
	\begin{equation}\label{interaction}
		\mathcal{I}:=\Big(\sum_{\alpha\in\mathbb{Z}}W_\alpha\Big)^p-\sum_{\alpha\in\mathbb{Z}}W_\alpha^p
		\end{equation}
	is the interaction term between different $W_\alpha$.

For any $\alpha\in\mathbb{Z}$, denote
\begin{equation}\label{Omega_alpha}
	\Omega_\alpha:=\{x: |x-\xi_\alpha| \leq |x-\xi_\beta|, ~ \forall \beta\neq\alpha\}
	\end{equation}
as the domain near $\xi_\alpha$.  It is easy to see that in $B_{\frac{D_{\alpha}}{2}}(\xi_{\alpha})\subset \Omega_\alpha$,  and 
    $$W_\alpha(x)\geq W_\beta(x),\quad\forall x\in \Omega_\alpha, \beta \neq \alpha.$$ In particular, we have:
\begin{lemma}The interaction term $\mathcal{I}$ satisfies the following estimate:

\begin{equation}\label{Holder estimate on interaction term}
\|\mathcal{I}\|_{C^{0}(\Omega_\alpha)}\lesssim \sum_{\beta\neq \alpha}  W_{\alpha}^{p-1} W_\beta +e^{-10C_0D}\lesssim
 e^{-\frac{pD_{\alpha}}{2}},  \quad \mbox{if } 1<p<2.                                          
\end{equation}
\end{lemma}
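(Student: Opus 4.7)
The plan is to split the chain \eqref{Holder estimate on interaction term} into a pointwise algebraic bound (i) $\mathcal{I}(x)\lesssim \sum_{\beta\neq\alpha}W_\alpha^{p-1}(x)W_\beta(x) + e^{-10C_0 D}$ on $\Omega_\alpha$, and a pointwise exponential decay (ii) $W_\alpha^{p-1}(x)W_\beta(x)\lesssim e^{-p|\xi_\alpha-\xi_\beta|/2}$ on $\Omega_\alpha$, which when summed over $\beta$ gives the $e^{-pD_\alpha/2}$ bound.  The residual $e^{-10C_0 D}$ is subsumed by $e^{-pD_\alpha/2}$ because the uniform condition yields $D_\alpha\leq C_0 D$ and $p<2$, so $pD_\alpha/2<10C_0 D$.

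For (i), I would start from $\mathcal{I}=(W_\alpha+h)^p - W_\alpha^p -\sum_{\beta\neq\alpha}W_\beta^p$ with $h:=\sum_{\beta\neq\alpha}W_\beta$, apply the mean value inequality $(W_\alpha+h)^p-W_\alpha^p\leq p(W_\alpha+h)^{p-1}h$ together with the subadditivity $(a+b)^{p-1}\leq a^{p-1}+b^{p-1}$ (valid for $0<p-1<1$), and use $\sum_{\beta\neq\alpha}W_\beta^p\geq 0$ to arrive at
\[\mathcal{I}\leq p\,W_\alpha^{p-1}h + p\,h^p.\]
The first summand is already in the target form, so the remaining task is to control $h^p$.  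I would split $h=h_{\mathrm{near}}+h_{\mathrm{far}}$ by thresholding at $|\xi_\beta-\xi_\alpha|\leq M C_0 D$ for a large absolute constant $M$.  The geometric inequality $|x-\xi_\beta|\geq\tfrac12|\xi_\alpha-\xi_\beta|$ on $\Omega_\alpha$ (obtained by splitting into the two cases $|x-\xi_\alpha|\leq|\xi_\alpha-\xi_\beta|/2$ and $|x-\xi_\alpha|>|\xi_\alpha-\xi_\beta|/2$) combined with the packing count $\#\{\beta:|\xi_\alpha-\xi_\beta|\in[kD_\alpha,(k+1)D_\alpha]\}\lesssim (k+1)^{n-1}C_0^n$ (from the uniform condition) produces $h_{\mathrm{far}}\lesssim e^{-MD/3}$ via a convergent geometric series; choosing $M$ large enough then yields $h_{\mathrm{far}}^p+W_\alpha^{p-1}h_{\mathrm{far}}\lesssim e^{-10C_0D}$.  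For the near part, the sum has only $O((MC_0)^n)$ terms, so Jensen's inequality gives $h_{\mathrm{near}}^p\lesssim \sum_{\beta\text{ near}}W_\beta^p$, and the crucial observation $W_\beta^p=W_\beta^{p-1}W_\beta\leq W_\alpha^{p-1}W_\beta$ in $\Omega_\alpha$ (valid since $p-1>0$ and $W_\beta\leq W_\alpha$ there) converts this into $\lesssim \sum_{\beta\neq\alpha}W_\alpha^{p-1}W_\beta$, completing (i).

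For (ii), the exponential decay of $W$ reduces the pointwise bound on $W_\alpha^{p-1}W_\beta$ to the optimization of $(p-1)t+s$ on $\{0\leq t\leq s,\ s+t\geq|\xi_\alpha-\xi_\beta|\}$, where $t:=|x-\xi_\alpha|$ and $s:=|x-\xi_\beta|$.  A routine check of the boundary pieces, using $p<2$, shows the minimum equals $\tfrac{p}{2}|\xi_\alpha-\xi_\beta|$ and is attained at the midpoint $t=s=|\xi_\alpha-\xi_\beta|/2$.  Summing over $\beta$ with the same packing count as in (i) then yields a geometric series with leading term $e^{-pD_\alpha/2}$, finishing the proof.

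The main obstacle is the $h^p$ term produced in (i), which has no analogue in the $p\geq 2$ analysis of \cite{awl}, where $(W_\alpha+h)^p-W_\alpha^p-h^p\lesssim W_\alpha^{p-1}h$ already suffices once $W_\alpha\geq h$.  For $1<p<2$ the rescue is the simple but decisive inequality $W_\beta^p\leq W_\alpha^{p-1}W_\beta$ available throughout $\Omega_\alpha$ (requiring $p>1$); coupled with the near/far split, it converts the near contribution of $h^p$ into a piece of $\sum_{\beta\neq\alpha}W_\alpha^{p-1}W_\beta$ and absorbs the far contribution into $e^{-10C_0D}$.
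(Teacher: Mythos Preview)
Your argument is correct and reaches the same conclusion as the paper, but the decomposition differs. The paper splits according to the position of $x$: if $R:=|x-\xi_\alpha|\geq 10C_0 D_\alpha$ then a dyadic-shell count shows $\sigma_\alpha\lesssim e^{-10C_0 D}$ and $|\mathcal I|$ is trivially small; if $R\leq 10C_0 D_\alpha$ the same shell count (now with $(R/D_\alpha)^n=O(1)$) yields the pointwise comparison $\sigma_\alpha\lesssim W_\alpha$, whence $(W_\alpha+\sigma_\alpha)^p-W_\alpha^p\lesssim W_\alpha^{p-1}\sigma_\alpha$ directly by the mean value theorem, and the exponential bound follows from $(p-1)|x-\xi_\alpha|+|x-\xi_\beta|\geq (p-1)D_\alpha+(2-p)|x-\xi_\beta|$. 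You instead split the \emph{sum} $h=\sum_{\beta\neq\alpha}W_\beta$ by the distance $|\xi_\beta-\xi_\alpha|$, never establish $\sigma_\alpha\lesssim W_\alpha$, and in its place exploit the subadditivity of $t\mapsto t^{p-1}$ to isolate an $h^p$ term which you then convert back via $W_\beta^p\leq W_\alpha^{p-1}W_\beta$ on $\Omega_\alpha$ together with Jensen on the finitely many near terms. The paper's route is marginally shorter once the pointwise comparison $\sigma_\alpha\lesssim W_\alpha$ is in hand; yours bypasses that comparison entirely and makes the role of the hypothesis $1<p<2$ more explicit (it enters both in the subadditivity of $t^{p-1}$ and in the midpoint optimisation of step~(ii)).
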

\begin{proof}
  Denote \begin{equation}\sigma_{\alpha}:= \sum_{\beta\neq \alpha} W_\beta.
  \end{equation}
  For a fix point $x\in\Omega_\alpha $, denote $R:= |x-\xi_{\alpha}|$, by the uniform condition, 
  $$\#\{\xi_{\beta}~|~\xi_{\beta}\in B_{2^{k+1}R}(x)\setminus  B_{2^{k}R}(x) \} =O\left(2^{kn}\Big(\frac{R}{D_{\alpha}}\Big)^n\right).$$
  If $R\geq 10C_0D_{\alpha}$,
   $$\sigma_{\alpha}\lesssim \sum_{k=1}^{\infty} 2^{kn}\Big(\frac{R}{D_{\alpha}}\Big)^n \left(2^{k}R\right)^{-\frac{n-1}{2}}  e^{-2^{k}R}  \lesssim e^{-10C_0D},$$
   which implies that
   $$  |\mathcal{I} |  \lesssim e^{-pD_{\alpha}}.$$
   For the case $R\leq 10C_0 D_{\alpha}$, we have
    $$\sigma_{\alpha}\lesssim \sum_{k=1}^{\infty} 2^{kn} \left(2^{k}R\right)^{-\frac{n-1}{2}}  e^{-2^{k}R}  \lesssim W_{\alpha},$$
 therefore, we obtain
  \begin{equation}\label{4.9}
  \begin{aligned}
   |\mathcal{I}|&\lesssim
   \left|(\sigma_{\alpha}+W_{\alpha})^p-W_{\alpha}^p\right|+\sum_{\beta\neq \alpha }W_{\alpha}^{p-1}W_{\beta}\\&
   \lesssim
    W_{\alpha}^{p-1}\sigma_{\alpha}\\&\lesssim
  \sum_{\beta\neq \alpha } e^{-(p-1) |x-\xi_{\alpha}|- |x-\xi_{\beta}| }\\&
\leq   \sum_{\beta\neq \alpha } e^{-(p-1)D_{\alpha}-(2-p) |x-\xi_{\beta}|   }  
   \\&\leq \sum_{\beta\neq \alpha }e^{-(p-1)D_{\alpha}- \frac{(2-p)|\xi_{\alpha}-\xi_{\beta}|}{2}   }
   \lesssim e^{-\frac{pD_{\alpha}}{2}}.
  \end{aligned}
  \end{equation}
 In the last inequality, we again use the uniform condition to deduce $$ \sum_{\beta\neq \alpha }e^{- \frac{(2-p)|\xi_{\alpha}-\xi_{\beta}|}{2}   }\lesssim e^{-\frac{(2-p)D_{\alpha}}{2} }.$$
This completes the proof.
\end{proof}

\begin{remark}
As seen in the proof of the above lemma,
  the uniform condition implies that, when considering the interaction between a given ground state $W_{\alpha}$ and infinitely many other ground states, it suffices to account only for the finitely many
$W_{\beta}$ whose centers $\xi_{\beta}$ are nearest to $\xi_{\alpha}$, since the contributions from the remaining terms are negligibly small.
\end{remark}

	In the following of this section, we establish a $C^{1}$ estimate for the error function $\phi$. It will be seen that the main order term in this estimate comes from the interaction between different peaks.

Our proof strategy is similar to \cite{awl}.
	We will consider the estimate on $\phi$ separately in the inner domains $B_K(\xi_\alpha)$ (for each $\alpha\in\mathbb{Z}$) and the outer domain $\mathbb{R}^{n}\setminus \cup_{\alpha\in\mathbb{Z}}B_{L}(\xi_\alpha)$. Here $K$ is the constant used in the definition of orthogonal decomposition in Section 2, while $L$ is a constant satisfying $1\ll L \ll K$. Notice that since $L<K$, the inner and outer domains have nonempty overlaps.
	
	\subsection{Outer problem} The error function $\phi$ satisfies
	\begin{equation}\label{outer eqn}
		-\Delta \phi+\phi =E_{out},
	\end{equation}
	where
\[E_{out} :=\Big(\sum_{\alpha\in\mathbb{Z}} W_\alpha+\phi\Big)^p-\sum_{\alpha\in\mathbb{Z}} W_\alpha^p.\]

	We have the following pointwise bound on $E_{out}$.
	\begin{lemma}\label{estimate for Eout}
In the outer domain $\Omega_{out}:=\mathbb{R}^{n}\setminus \cup_{\alpha\in\mathbb{Z}}B_{L}(\xi_\alpha)$,
	there exists a small constant $\tau >0$ (depending  only on $L$) such that
 \[ |E_{out}|\leq \tau |\phi|+\mathcal{I}.\]
	\end{lemma}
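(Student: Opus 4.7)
The plan is to split off the interaction term $\mathcal{I}$ by writing
\[
E_{out} = \Big[\Big(\sum_\alpha W_\alpha + \phi\Big)^p - \Big(\sum_\alpha W_\alpha\Big)^p\Big] + \mathcal{I} =: E_1 + \mathcal{I},
\]
so it suffices to show $|E_1| \leq \tau|\phi|$ pointwise on $\Omega_{out}$ with $\tau = \tau(L)$ small.

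Since $u = \sum_\alpha W_\alpha + \phi > 0$ and $\sum_\alpha W_\alpha \geq 0$, I would apply the mean value theorem to $t\mapsto t^p$ on $[0,\infty)$; because $p>1$ the derivative $pt^{p-1}$ is increasing, hence for $a,b\geq 0$ one has $|a^p-b^p|\leq p\max(a,b)^{p-1}|a-b|$. Applied with $a=u$ and $b=\sum_\alpha W_\alpha$ this gives
\[
|E_1| \leq p \, \max\Big(u,\, \sum_\alpha W_\alpha\Big)^{p-1} |\phi|.
\]
The task therefore reduces to showing $\max(u,\sum_\alpha W_\alpha) \lesssim e^{-L/C}$ on $\Omega_{out}$. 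For $u$ this is immediate from the decay estimate \eqref{decay}, since $d(x,A)\geq L$ on $\Omega_{out}$. For the infinite sum $\sum_\alpha W_\alpha$, I would repeat the dyadic annular decomposition already used in the proof of \eqref{Holder estimate on interaction term}: picking $\alpha^\ast$ with $R:=|x-\xi_{\alpha^\ast}|=d(x,A)\geq L$, the uniform condition gives $\#\{\xi_\beta\in B_{2^{k+1}R}(x)\setminus B_{2^k R}(x)\}=O(2^{kn}(R/D_{\alpha^\ast})^n)$, and inserting \eqref{infi} into the resulting geometric series yields $\sum_\alpha W_\alpha \lesssim W_{\alpha^\ast}(x) \lesssim e^{-L}$ when $R\leq 10C_0 D_{\alpha^\ast}$, and $\lesssim e^{-10C_0 D}$ otherwise; since $D\to\infty$ as $\varepsilon\to 0$, both cases give the bound $\lesssim e^{-L/C}$.

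Combining these estimates produces $|E_1|\leq C e^{-(p-1)L/C}|\phi|$, so the lemma follows with $\tau=Ce^{-(p-1)L/C}$, which shrinks to $0$ as $L\to\infty$. There is no genuine obstacle; the only subtlety is that for $1<p<2$ the exponent $p-1$ is positive but can be arbitrarily small, so to achieve a prescribed smallness of $\tau$ one must take $L$ large in a $p$-dependent way---this is exactly why $\tau$ is allowed to depend on $L$ rather than being universal.
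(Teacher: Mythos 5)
Your argument is correct and is essentially the paper's own proof with the details filled in: the paper simply observes that, by the decay estimate \eqref{decay}, both $\sum_\alpha W_\alpha$ and $\phi$ are $o_L(1)$ in $\Omega_{out}$ and that the bound then follows from the definition of $E_{out}$, which is exactly your splitting $E_{out}=\bigl[(\sum_\alpha W_\alpha+\phi)^p-(\sum_\alpha W_\alpha)^p\bigr]+\mathcal{I}$ plus the mean value theorem with $\tau\sim e^{-(p-1)L/C}$. The only (harmless) point left implicit is that $\mathcal{I}\ge 0$ for $p>1$, so $|\mathcal{I}|=\mathcal{I}$ and your estimate matches the stated form of the lemma.
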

\begin{proof}
In fact, by \eqref{decay}, in the outer domain we have
$$\sum_{\alpha\in\mathbb{Z}} W_\alpha \mbox{ and } \phi =o_{L}(1),$$
and the lemma then follows directly from the definition of $E_{out}$.
\end{proof}

Based on this lemma and Kato's inequality, we see that in the outer domain,
\begin{equation}
    -\Delta|\phi|+(1-\tau)|\phi| \leq \mathcal{I}.
    \end{equation}

 Denote the Green function of the elliptic operator $-\Delta+(1-\tau)$ by $G_\tau (\cdot,\cdot)$. By the asymptotic behavior of the Green function, we have the following upper bound
	\begin{equation}\label{Green fct bound}
		0< G_\tau (x,y) \leq Ce^{-(1-\tau)|x-y|} \quad \mbox{ for }|x-y|\geq 1.
	\end{equation}
Moreover, we have $G\geq 0$ and
  $$(1-\tau)\int_{\mathbb{R}^n} G(x,y)\mathrm{d}y=1. $$
By the maximum principle and \eqref{4.9}, we obtain
\begin{align}\label{Green representation}
		|\phi(x)|&\leq \sum_{\beta\in\mathbb{Z}} \int_{\partial B_L(\xi_\beta)}P_{\tau,\beta} (x,y)|\phi(y)| \,\mathrm{d}y\\
  &+C\sum_{\beta\in\mathbb{Z}} \int_{\Omega_\beta}G_\tau (x,y)W_\beta^{p-1}(y) \sum_{\gamma\neq \beta} W_\gamma(y)\,\mathrm{d}y, \nonumber
\end{align}
where $P_{\tau,\beta} (\cdot,\cdot)$ is the Poisson kernel associated to $-\Delta+(1-\tau )$ in the exterior domain
$\mathbb{R}^{n}\setminus B_{L}(\xi_\beta)$.
For any fixed $\alpha\in\mathbb{Z}$, denote 
$$ \widehat{\Omega_\alpha}:=\left\{x~|~ d(x, \Omega_\alpha)<1\right\},$$
 if $x\in  \widehat{\Omega_\alpha} \setminus B_{K/3}(\xi_\alpha)$, then plugging the exponential decay estimates on $G_\tau$ and $P_\tau$ into \eqref{Green representation} and using   \eqref{Holder estimate on interaction term} gives
\begin{align}\label{outer estimate 1}
|\phi(x)| \lesssim & e^{-\frac{(1-\tau)K}{3}}\|\phi\|_{L^\infty\left(\partial B_L(\xi_\alpha)\right)} +\sum_{\beta\neq\alpha} e^{-(1-\tau)|x-\xi_\beta| }\|\phi\|_{L^\infty\left(\partial B_L(\xi_\beta)\right)} \nonumber\\
+&e^{-10C_0D}+ \int_{\Omega_\alpha} G_\tau (x,y) W_\alpha^{p-1}(y) \sum_{\gamma\neq \alpha} W_\gamma(y)\,\mathrm{d}y\\
+& \sum_{\beta\neq\alpha} \int_{\Omega_\beta}G_\tau (x,y)W_\beta^{p-1}(y) \sum_{\gamma\neq \beta} W_\gamma(y)\,\mathrm{d}y. \nonumber
\end{align}

Here, the first two terms on the right-hand side are obtained from the boundary integrals as described in \eqref{Green representation}, and an exponential upper bound on the Poisson kernel similar to \eqref{Green fct bound} has been utilized.

The last two integrals in \eqref{outer estimate 1} satisfy the following estimates:
\begin{equation}
   \int_{\Omega_\alpha} G_\tau (x,y) W_\alpha^{p-1}(y) \sum_{\gamma\neq \alpha} W_\gamma(y)\,\mathrm{d}y \leq Ce^{-\frac{pD_{\alpha}}{2}} \int_{\Omega_\alpha} G_\tau (x,y)  \,\mathrm{d}y
   \leq Ce^{-\frac{pD_{\alpha}}{2}}.
\end{equation}
For $y\in \Omega_\beta$, using the uniform condition,  we have
$$\sum_{\gamma\neq \beta} e^{-|y-\xi_\gamma|}\lesssim e^{-\frac{D_{\beta}}{2}}\lesssim e^{-\frac{D}{2}}.$$
Fix $p\in (1,2)$. By choosing $L$ sufficiently large, we can ensure that $0<\tau<\min\left\{2-p, \frac{p-1}{C_0}\right\}$. Then, for $x\in \widehat{\Omega_\alpha} \setminus B_{K/3}(\xi_\alpha)$, we obtain
\begin{equation}
\begin{aligned}
&\sum_{\beta\neq\alpha} \int_{\Omega_\beta}G_\tau (x,y)W_\beta^{p-1}(y) \sum_{\gamma\neq \beta} W_\gamma(y)\,\mathrm{d}y\\
\lesssim &\sum_{\beta\neq\alpha}  \int_{\Omega_\beta}G_\tau (x,y)W_\beta(y) \sum_{\gamma\neq \beta} W^{p-1}_\gamma(y)\,\mathrm{d}y\\
\lesssim &\sum_{\beta\neq\alpha}  \int_{\Omega_\beta} \sum_{\gamma\neq \beta}e^{-(1-\tau)|y-x|-|y-\xi_\beta|-(p-1)|y-\xi_\gamma|}\,\mathrm{d}y\\&+W_\beta(x) \sum_{\gamma\neq \beta} W^{p-1}_\gamma(x)\int_{B_{1}(x)}G(x,y)\,\mathrm{d}y\\
\lesssim &\sum_{\beta\neq\alpha} e^{-(1-\tau)|x-\xi_\beta|-\frac{p-1}{2}D} \int_{\Omega_\beta} e^{-\tau |y-\xi_\beta|}\,\mathrm{d}y+e^{-\frac{p}{2}D_{\alpha}}\\
\lesssim &e^{-\frac{(1-\tau)}{2}D_{\alpha}-\frac{p-1}{2}D}.
\end{aligned}
\end{equation}
Therefore, we conclude that 
\begin{align}\label{outer estimate 2}
\|\phi\|_{L^\infty\left(\widehat{\Omega_\alpha}\setminus _{K/3}(\xi_\alpha)\right)} &\lesssim   e^{-\frac{(1-\tau)K}{3}}\|\phi\|_{L^\infty\left(\partial B_L(\xi_\alpha)\right)} +\sum_{\beta\neq\alpha} e^{-(1-\tau)|x-\xi_\beta| }\|\phi\|_{L^\infty\left(\partial B_L(\xi_\beta)\right)}\nonumber\\
&+e^{-\frac{(1-\tau)}{2}D_{\alpha}-\frac{p-1}{2}D} .
\end{align}

\subsection{Inner problem}

In view of \eqref{orthogonal condition}, for each $\alpha\in\mathbb{Z}$, we define \emph{the $\alpha$-th inner function} to be $\phi_\alpha:=\phi \eta_{K,\alpha}$. It satisfies
\begin{equation}
	-\Delta \phi_\alpha+\phi_\alpha=pW_\alpha^{p-1} \phi_\alpha+E_\alpha,
\end{equation}
where
\begin{equation}\label{eqn for E alpha}
    E_\alpha := \left[\Big(\sum_{\beta\in\mathbb{Z}}W_\beta+\phi\Big)^p-\sum_{\beta\in\mathbb{Z}} W_\beta^p -pW_\alpha^{p-1}\phi\right]\eta_{K,\alpha}-\Delta\eta_{K,\alpha}\phi-2\nabla\phi\nabla\eta_{K,\alpha} .
\end{equation}
Furthermore, by \eqref{orthogonal condition}, $\phi_\alpha$ satisfies the orthogonal condition
\begin{equation}
    \int_{\mathbb{R}^{n}}\phi_\alpha Z_{i,\alpha}\,\mathrm{d}x=0, \quad \text{for } i=1, \dots, n.
\end{equation}
Define
$$\|\phi\|_{\delta,\alpha}:=\sup_{x\in \mathbb{R}^{n}}e^{\delta|x-\xi_{\alpha}|}\left|\phi(x) \right|.$$
By Proposition \ref{prop prior estimate} (for any $\delta>0$ sufficiently small, which will be determined below),
\begin{equation}\label{estimate on delta norm of inner fct}
    \|\phi_\alpha\|_{\delta,\alpha} \lesssim \|E_\alpha\|_{\delta,\alpha}.
\end{equation}
Therefore, an explicit estimate of $E_\alpha$ is needed.
\begin{lemma}
For any fixed $\alpha\in\mathbb{Z}$, it holds that
\begin{equation}\label{estimate on delta norm of inner fct, 2}
\|\phi_\alpha\|_{\delta,\alpha} \lesssim e^{\delta K} D_\alpha^{-\frac{n-1}{2}}e^{-D_\alpha }+ e^{\delta K}\|\phi\|_{C^{1}\left(B_{K}(\xi_\alpha)\setminus B_{K/2}(\xi_\alpha)\right)}.
\end{equation}
\end{lemma}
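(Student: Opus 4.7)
The plan is to apply Proposition~\ref{prop prior estimate} to the linear equation satisfied by $\phi_\alpha$, which gives $\|\phi_\alpha\|_{\delta,\alpha}\lesssim\|E_\alpha\|_{\delta,\alpha}$ provided $\delta>0$ is chosen sufficiently small, and then to produce a weighted pointwise bound on each piece of $E_\alpha$. From \eqref{eqn for E alpha} I would decompose
\[
E_\alpha=\mathcal{I}\,\eta_{K,\alpha}+p\Bigl[\Bigl(\sum_{\beta} W_\beta\Bigr)^{p-1}-W_\alpha^{p-1}\Bigr]\phi\,\eta_{K,\alpha}+N(\phi)\,\eta_{K,\alpha}-\Delta\eta_{K,\alpha}\,\phi-2\nabla\phi\cdot\nabla\eta_{K,\alpha},
\]
and treat the four groups in turn, all concentrated on the support $B_{2K}(\xi_\alpha)$ of $\eta_{K,\alpha}$.

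For the leading interaction piece, I would sharpen the computation behind \eqref{Holder estimate on interaction term}: on $B_{2K}(\xi_\alpha)$ the dominant contribution to $\mathcal{I}$ is $W_\alpha^{p-1}\sum_{\beta\in\mathcal{N}_\alpha}W_\beta$, and using the precise asymptotic \eqref{infi} together with the triangle inequality $|x-\xi_\beta|\ge D_\alpha-|x-\xi_\alpha|$ yields
\[
\mathcal{I}(x)\lesssim D_\alpha^{-\frac{n-1}{2}}e^{-D_\alpha}\,e^{(2-p)|x-\xi_\alpha|}.
\]
Multiplying by the weight $e^{\delta|x-\xi_\alpha|}$ and taking the supremum for $|x-\xi_\alpha|\le 2K$ produces the first term on the right of \eqref{estimate on delta norm of inner fct, 2}, after absorbing the additional constant $2-p+\delta$ into the generic symbol $\delta$. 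The commutator terms $\Delta\eta_{K,\alpha}\phi$ and $\nabla\phi\cdot\nabla\eta_{K,\alpha}$ are supported in the annulus where $\nabla\eta_{K,\alpha}\not\equiv 0$, and their weighted norm is controlled by $e^{\delta K}\|\phi\|_{C^1}$ on that annulus, giving the second term of \eqref{estimate on delta norm of inner fct, 2}.

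The remaining two pieces are to be absorbed into the left-hand side. On $B_{2K}(\xi_\alpha)$ the uniform condition gives $\sum_{\beta\ne\alpha}W_\beta\lesssim e^{-D/3}$, so $|(\sum_{\beta}W_\beta)^{p-1}-W_\alpha^{p-1}|=o(1)$ uniformly, whence
\[
\Bigl\|p\Bigl[\Bigl(\sum_{\beta} W_\beta\Bigr)^{p-1}-W_\alpha^{p-1}\Bigr]\phi\,\eta_{K,\alpha}\Bigr\|_{\delta,\alpha}=o(1)\,\|\phi_\alpha\|_{\delta,\alpha}.
\]
For the nonlinear piece, the inequality $|N(\phi)|\lesssim|\phi|^p$ valid for $1<p<2$, together with $\|\phi\|_{L^\infty(B_{2K})}=o(1)$ (which follows from \eqref{decay} and standard elliptic regularity), gives $|N(\phi)\eta_{K,\alpha}|\lesssim\|\phi\|_{L^\infty(B_{2K})}^{p-1}|\phi_\alpha|=o(1)\,|\phi_\alpha|$, so $\|N(\phi)\eta_{K,\alpha}\|_{\delta,\alpha}=o(1)\|\phi_\alpha\|_{\delta,\alpha}$ is also absorbed.

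The main technical delicacy is the coordinated choice of $\delta$: it must be small enough so that Proposition~\ref{prop prior estimate} applies, so that the growth factor $e^{(2-p+\delta)|x-\xi_\alpha|}$ arising in the interaction estimate remains tolerable on $B_{2K}$, and so that the absorption of the linear correction and nonlinear terms into the left-hand side is valid. Once $\delta$ is fixed in this way, combining the four contributions yields \eqref{estimate on delta norm of inner fct, 2}.
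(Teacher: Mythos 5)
Your proposal is correct and follows essentially the same route as the paper: invoke Proposition~\ref{prop prior estimate} to get $\|\phi_\alpha\|_{\delta,\alpha}\lesssim\|E_\alpha\|_{\delta,\alpha}$, bound the interaction part of $E_\alpha$ on the support of $\eta_{K,\alpha}$ by $D_\alpha^{-\frac{n-1}{2}}e^{-D_\alpha}$ (up to $K$-dependent constants), control the cut-off commutator terms by the $C^1$ norm of $\phi$ on the annulus where $\nabla\eta_{K,\alpha}\neq 0$, and absorb the remaining $o(1)\|\phi_\alpha\|$ contributions (linear correction and $N(\phi)$, using $|\phi|\ll 1$) into the left-hand side. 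The only cosmetic discrepancy is the annulus: with $\eta_K=\eta(\cdot/K)$ and $\eta\in C_0^\infty(B_2)$ the commutator terms live on $B_{2K}\setminus B_K$ rather than the $B_K\setminus B_{K/2}$ written in the statement, but this mismatch is present in the paper's own formulation and does not affect the argument.
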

\begin{proof}
The support of $E_\alpha$ lies within $B_K(\xi_\alpha)$. In this ball, $W_\alpha$ has a positive lower bound (dependent on $K$ but fixed), while
\[ \sum_{\beta\neq\alpha}W_\beta\leq  e^{-D_{\alpha}}, \quad |\phi|\ll 1.\]
Consequently, in this ball, via the uniform condition, we get
\begin{equation}
\begin{aligned}
& \left|\Big(\sum_{\beta\in\mathbb{Z}}W_\beta+\phi\Big)^p-\sum_{\beta\in\mathbb{Z}} W_\beta^p -pW_\alpha^{p-1}\phi\right|\\
  \lesssim  &pW_\alpha^{p-1}\sum_{\beta\neq\alpha}W_{\beta}+O\bigg(\Big(\sum_{\beta\neq \alpha}W_{\beta}\Big)^2+\phi^2\bigg) + e^{-pD_{\alpha}}
|\phi|^p\\
\lesssim  & D_\alpha^{-\frac{n-1}{2}}e^{-D_{\alpha}}+o\left(\|\phi_\alpha\|_{L^\infty(\mathbb{R}^{n})}\right).
\end{aligned}
\end{equation}

Plugging the estimate from this lemma into \eqref{estimate on delta norm of inner fct}, we get
\[\|\phi_\alpha\|_{\delta,\alpha} \lesssim e^{\delta K} D_\alpha^{-\frac{n-1}{2}}e^{-D_\alpha }+ e^{\delta K}\|\phi\|_{C^{1}\left(B_{K}(\xi_\alpha)\setminus B_{K/2}(\xi_\alpha)\right)}+o\left(\|\phi_\alpha\|_{L^\infty(\mathbb{R}^{n})}\right).\]
Since $\|\phi_\alpha\|_{L^\infty(\mathbb{R}^{n})} \leq \|\phi_\alpha\|_{\delta,\alpha}$, this implies that
\begin{equation}
\|\phi_\alpha\|_{\delta,\alpha} \lesssim e^{\delta K} D_\alpha^{-\frac{n-1}{2}}e^{-D_\alpha }+ e^{\delta K}\|\phi\|_{C^{1}\left(B_{K}(\xi_\alpha)\setminus B_{K/2}(\xi_\alpha)\right)}.
\end{equation}
\end{proof}
By the definition of $\eta_{K,\alpha}$ and because $3L<K/2$, we have $\phi\equiv \phi_\alpha$ in $B_{3L}(\xi_\alpha)$. Consequently, \eqref{estimate on delta norm of inner fct, 2} implies that
\[\|\phi\|_{L^\infty\left(B_{3L}(\xi_\alpha)\right)}\lesssim e^{\delta K}D_\alpha^{-\frac{n-1}{2}} e^{-D_\alpha }+ e^{\delta K}\|\phi\|_{C^{2,1/2}\left(B_{K}(\xi_\alpha)\setminus B_{K/2}(\xi_\alpha)\right)}.\]
Applying  interior Calderón–Zygmund estimation to \eqref{error eqn 2} and using the Sobolev embedding, we deduce that
\begin{equation}\label{inner estimate}
  \|\phi\|_{C^{1}\left(B_{2L}(\xi_\alpha)\right)} \lesssim e^{\delta K}D_\alpha^{-\frac{n-1}{2}}e^{-D_\alpha }+ e^{\delta K}\|\phi\|_{C^{1}\left(B_{K}(\xi_\alpha)\setminus B_{K/2}(\xi_\alpha)\right)}.
\end{equation}

\subsection{Completion of the estimate on the error function}
 \begin{proposition}\label{prop estimate on error fct}
 For  the subcritical exponent
 $p\in (1,2)$,
there exists a  constant $\tau\in \left(0,\min\left\{2-p, \frac{p-1}{C_0}\right\}\right)$, such that
    \begin{equation}\label{estimate on error fct}
    \|\phi\|_{C^1(\Omega_{\alpha})} \lesssim  
                                                      e^{-\frac{(1-\tau)D_{\alpha}}{2}-\frac{(p-1)D}{2}}.
    \end{equation}
 \end{proposition}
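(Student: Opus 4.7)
The plan is to couple the outer estimate \eqref{outer estimate 2} and the inner estimate \eqref{inner estimate} into a single inequality for the boundary quantities $a_\beta:=\|\phi\|_{L^\infty(\partial B_L(\xi_\beta))}$, deduce the stated pointwise bound on $\phi$, and then upgrade to $C^{1}$ via interior elliptic regularity. For the coupling, observe that the shell $B_K(\xi_\alpha)\setminus B_{K/2}(\xi_\alpha)$ appearing on the right of \eqref{inner estimate} sits inside the outer region $\widehat{\Omega_\alpha}\setminus B_{K/3}(\xi_\alpha)$, and for $x$ in that shell we have $|x-\xi_\beta|\geq|\xi_\alpha-\xi_\beta|-K$. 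Substituting \eqref{outer estimate 2} into \eqref{inner estimate} therefore yields
\[a_\alpha \lesssim e^{\delta K-(1-\tau)K/3}a_\alpha + e^{(\delta+1-\tau)K}\sum_{\beta\neq\alpha}e^{-(1-\tau)|\xi_\alpha-\xi_\beta|}a_\beta + e^{\delta K}\bigl(D_\alpha^{-\frac{n-1}{2}}e^{-D_\alpha}+e^{-\frac{(1-\tau)D_\alpha}{2}-\frac{(p-1)D}{2}}\bigr).\]
Picking $\delta<(1-\tau)/3$ absorbs the self-coupling term into the left-hand side; since $\tau<2-p$, the decay exponent $(p-\tau)/2$ is strictly less than $1$, so the pure-decay source $e^{-D_\alpha}$ is dominated by the target $e^{-(1-\tau)D_\alpha/2-(p-1)D/2}$ once $D_\alpha$ is large.

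For the cross-coupling, I use the trivial a priori bound $a_\beta\leq C$ (from the uniform $L^\infty$ bounds on $u_\varepsilon$ and on $W$) together with the dyadic count provided by the uniform condition,
\[\sum_{\beta\neq\alpha}e^{-(1-\tau)|\xi_\alpha-\xi_\beta|}\lesssim \sum_{k\geq 1}k^{n-1}e^{-(1-\tau)k D_\alpha}\lesssim e^{-(1-\tau)D_\alpha}.\]
The cross-coupling thus contributes $\lesssim e^{(\delta+1-\tau)K-(1-\tau)D_\alpha}$. Comparing with the target profile and using $D_\alpha\geq D$ together with $\tau<2-p$,
\[e^{(\delta+1-\tau)K-(1-\tau)D_\alpha}\cdot e^{(1-\tau)D_\alpha/2+(p-1)D/2}=e^{(\delta+1-\tau)K+(p-2+\tau)D/2-(1-\tau)(D_\alpha-D)/2},\]
which tends to $0$ as $\varepsilon\to 0$. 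Combining the three contributions gives $a_\alpha\lesssim e^{-(1-\tau)D_\alpha/2-(p-1)D/2}$.

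With $\{a_\beta\}$ now controlled, the weighted inner bound \eqref{estimate on delta norm of inner fct, 2} delivers the $L^\infty$ estimate on $B_K(\xi_\alpha)$, while the outer bound \eqref{outer estimate 2} delivers it on $\Omega_\alpha\setminus B_{K/3}(\xi_\alpha)$; together these cover $\Omega_\alpha$ once $D_\alpha>2K$, i.e.\ for $\varepsilon$ small. Interior Calder\'on--Zygmund estimates applied to \eqref{error eqn 2} together with the Sobolev embedding then upgrade this pointwise bound to the stated $C^{1}$ bound. The central difficulty is the coupling step above: one must pick $\delta<(1-\tau)/3$ with $\tau\in(0,\min\{2-p,(p-1)/C_0\})$ so that simultaneously the self-term is absorbable, the dyadic series converges strictly below the target rate, and the interaction contribution from \eqref{Holder estimate on interaction term} fits the target exponential. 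The uniform condition is essential throughout, both to make the dyadic sum converge geometrically and to ensure that $D_\beta\asymp D_\alpha$ uniformly, so that the trivial $a_\beta\leq C$ bound feeds uniformly into the cross-coupling estimate.
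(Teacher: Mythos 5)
Your argument is correct, and it reaches \eqref{estimate on error fct} by a slightly different closing mechanism than the paper, although it uses exactly the same ingredients (\eqref{outer estimate 2}, \eqref{inner estimate}, \eqref{Holder estimate on interaction term}, and Calder\'on--Zygmund plus Sobolev). The paper keeps the cross-coupling coefficient $\sum_{\beta\neq\alpha}e^{-(1-\tau)|x-\xi_\beta|}\lesssim e^{-\frac{(1-\tau)}{2}D_\alpha}$ valid for all $x\in\widehat{\Omega_\alpha}\setminus B_{K/3}(\xi_\alpha)$ (half the distance, since $x$ may sit near the Voronoi boundary), so the trivial bound on $\|\phi\|_{L^\infty(\partial B_L(\xi_\beta))}$ is not enough; it therefore first proves the crude global bound $\|\phi\|_{L^\infty(\mathbb{R}^n)}\lesssim e^{-\frac{(1-\tau)}{2}D}$, substitutes it back (using $\tau<2-p$ to convert $e^{-\frac{(1-\tau)}{2}D}$ into $e^{-\frac{(p-1)}{2}D}$), and repeats. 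You instead observe that in the coupled inequality for the boundary quantities $a_\alpha$ the only $x$ that matter lie in the shell $B_K(\xi_\alpha)\setminus B_{K/2}(\xi_\alpha)$, where $|x-\xi_\beta|\geq|\xi_\alpha-\xi_\beta|-K$, so the cross-coupling weight is $e^{-(1-\tau)(|\xi_\alpha-\xi_\beta|-K)}$ at essentially the \emph{full} mutual distance; together with the dyadic count this gives a coefficient $\lesssim e^{(\delta+1-\tau)K}e^{-(1-\tau)D_\alpha}$, which already beats the target $e^{-\frac{(1-\tau)D_\alpha}{2}-\frac{(p-1)D}{2}}$ with only the trivial a priori bound $a_\beta\leq C$ (this is where you genuinely use $\tau<2-p$). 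One back-substitution of the improved $a_\beta$ into \eqref{outer estimate 2} and \eqref{estimate on delta norm of inner fct, 2} then covers all of $\Omega_\alpha$, so the iteration of the paper is replaced by a single-pass absorption. Two small points you gloss over, both fixable by the same step the paper performs: (i) when you substitute \eqref{outer estimate 2} into \eqref{inner estimate}, the right-hand side of \eqref{inner estimate} carries a $C^1$ norm on the shell while \eqref{outer estimate 2} is an $L^\infty$ bound, so you must interpose the interior Calder\'on--Zygmund/Sobolev step on a slightly larger annulus, which also brings in the source $\|\mathcal{I}\|_{L^\infty(\Omega_\alpha)}\lesssim e^{-\frac{pD_\alpha}{2}}$ --- harmless, since $(p-1+\tau)D_\alpha\geq(p-1)D$ makes it smaller than the target; (ii) the domination $e^{-D_\alpha}\lesssim e^{-\frac{(1-\tau)D_\alpha}{2}-\frac{(p-1)D}{2}}$ holds simply because $D\leq D_\alpha$ and $\frac{p-\tau}{2}<1$ for any $\tau>0$, so your appeal to $\tau<2-p$ at that spot is superfluous (it is only needed for the cross-coupling comparison). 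The condition $\tau<\frac{p-1}{C_0}$ plays no role in either proof of the proposition itself; as in the paper, it is only needed afterwards to ensure the bound is $o\bigl(D_\alpha^{-\frac{n-1}{2}}e^{-\frac{D_\alpha}{2}}\bigr)$.
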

 \begin{remark}
   As we will see in the proof of Theorem \ref{main result}, the only estimate we need is $$\|\phi\|_{C^1\left(B_{\frac{D_{\alpha}}{2}}(\xi_{\alpha})\right)}=o\left(D_{\alpha}^{-\frac{n-1}{2}}e^{-\frac{D_{\alpha}}{2}}\right),$$
  which is guaranteed by \eqref{estimate on error fct}, since $\tau \in \left(0, \frac{p-1}{C_0}\right)$.
 \end{remark}
 \begin{proof}
   Using \eqref{outer estimate 2} and \eqref{inner estimate}, together with the fact
   $$ \sum_{\beta\neq\alpha} e^{-(1-\tau)|x-\xi_\beta| }\lesssim e^{-\frac{(1-\tau)}{2}D_{\alpha}},\quad \forall x\in \widehat{\Omega_\alpha}\setminus _{K/3}(\xi_\alpha),  $$
    we obtain 
   \begin{equation}\label{return}
   \begin{aligned}
\|\phi\|_{L^\infty\left(\widehat{\Omega_\alpha}\setminus _{K/3}(\xi_\alpha)\right)} &\lesssim   e^{-\frac{(1-\tau)K}{3}}\|\phi\|_{L^\infty\left(\partial B_L(\xi_\alpha)\right)} +e^{-\frac{(1-\tau)}{2}D_{\alpha}} \sup_{\beta\neq\alpha}\|\phi\|_{L^\infty\left(\partial B_L(\xi_\beta)\right)}\\
&+e^{-\frac{(1-\tau)}{2}D_{\alpha}-\frac{p-1}{2}D}\\&
 \lesssim  e^{-\frac{(1-\tau)K}{3}+\delta K}\|\phi\|_{C^{1}\left(B_{K}(\xi_\alpha)\setminus B_{K/2}(\xi_\alpha)\right)}+e^{-\frac{(1-\tau)}{2}D_{\alpha}}.
\end{aligned}
\end{equation}
Applying Calderón–Zygmund estimation  and Sobolev embedding once more, we further obtain
$$  \|\phi\|_{C^1\left(\Omega_\alpha\setminus B_{K/2}(\xi_\alpha)\right)} \lesssim   e^{-\frac{(1-\tau)K}{3}+\delta K}\|\phi\|_{C^{1}\left(B_{K}(\xi_\alpha)\setminus B_{K/2}(\xi_\alpha)\right)}+  e^{-\frac{(1-\tau)}{2}D_{\alpha}}.$$
By choosing $\delta<\frac{(1-\tau)}{3}$ and taking $K$ sufficiently large, we then deduce
$$  \|\phi\|_{C^1\left(\Omega_\alpha\setminus B_{K/2}(\xi_\alpha)\right)} \lesssim    e^{-\frac{(1-\tau)}{2}D_{\alpha}}.$$
Returning to \eqref{estimate on delta norm of inner fct, 2}, it follows that
$$  \|\phi\|_{C^1\left( B_{K/2}(\xi_\alpha)\right)}  \lesssim  \|\phi\|_{L^{\infty}\left( B_{K}(\xi_\alpha)\right)}\lesssim   e^{-\frac{(1-\tau)}{2}D_{\alpha}}. $$
 Since $\alpha$ is arbitrary, we conclude
$$\|\phi\|_{L^\infty\left(\mathbb{R}^n\right)} \lesssim  e^{-\frac{(1-\tau)}{2}D}.$$
Substituting this back into \eqref{return} and using $0<\tau<2-p$, we obtain 
$$  \|\phi\|_{L^\infty\left(\widehat{\Omega_\alpha}\setminus B_{K/3}(\xi_\alpha)\right)} \lesssim   e^{-\frac{(1-\tau)K}{3}+\delta K}\|\phi\|_{C^{1}\left(B_{K}(\xi_\alpha)\setminus B_{K/2}(\xi_\alpha)\right)}+ e^{-\frac{(1-\tau)}{2}D_{\alpha}-\frac{(p-1)D}{2}}.$$
Repeating this argument, the proof is complete.
 \end{proof}
\section{Proof of Theorem \ref{main result}}
Denote 
$$
\sigma = \sum_{\alpha\in \mathbb{Z}} W_\alpha, \quad \kappa_{\alpha}:=D_{\alpha}^{-\frac{n-1}{2}} e^{-D_{\alpha}} 
$$
For each $\alpha$,
by direct calculation we recall that
\begin{equation}\label{3-13}
\begin{aligned}
(-\Delta+1) \phi-pW_{\alpha}^{p-1}\phi
&= \mathcal{I} +  N(\phi)+ p\sigma^{p-1} \phi-pW_{\alpha}^{p-1} \phi ,
\end{aligned}
\end{equation}
where
  \begin{align*}
  N(\phi):=|\sigma+\phi|^{p-1}(\sigma+\phi) -\sigma^p -p\sigma^{p-1} \phi,\quad \mathcal{I}:=\sigma^p -\sum\limits_{\alpha\in \mathbb{Z}}  W_\alpha^p.
  \end{align*}

The proof of Theorem \ref{main result} relies on the following three estimates of  interaction term:
\begin{enumerate}
 \item By \eqref{es1} and the uniform condition,  $$\sigma_{\alpha}W_{\alpha}\lesssim \kappa_{\alpha} \mbox{~in } \mathbb{R}^n.$$
  \item By \eqref{es2}  and the uniform condition, for \(a,b>0\) with \(a\neq b\), one has
\[
\sum_{\beta \neq \alpha}\int_{\mathbb{R}^n} W_{\beta}^{a}\, W_{\alpha}^{b}\, \,\mathrm{d}x \;\sim\; \kappa_{\alpha}^{\min\{a,b\}} .
\]
 
  \item 
The last one follows from   \eqref{es3} together with  Lemma 2.1 in \cite{Wei}:
\begin{lemma}\label{lemma1}
  \begin{equation}
  \begin{aligned}
\int_{\mathbb{R}^n}  \mathcal{I} \nabla W_{\alpha} \,\mathrm{d}x&=p \sum_{\beta \neq \alpha}\int_{\mathbb{R}^n} W_\alpha^{p-1}W_\beta \nabla W_{\alpha} \,\mathrm{d}x
+o(\kappa_{\alpha})\\&
= p\bar{c}\sum_{\beta\neq \alpha} \frac{\xi_{\beta}-\xi_{\alpha}}{|\xi_{\beta}-\xi_{\alpha}|}|\xi_{\beta}-\xi_{\alpha}|^{-\frac{n-1}{2}}e^{-|\xi_{\beta}-\xi_{\alpha}|}+o(\kappa_{\alpha}).
\end{aligned}
\end{equation}
\end{lemma}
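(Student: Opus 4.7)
The plan is to prove the two equalities in turn. The second is a direct evaluation, while the first carries the real technical content.

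For the second equality, I would fix $\beta \neq \alpha$ and substitute $y = x-\xi_\alpha$ in $\int W_\alpha^{p-1}W_\beta\nabla W_\alpha \,\mathrm{d}x$ to obtain $\int W^{p-1}(y) W(y - z) \nabla W(y)\,\mathrm{d}y$ with $z = \xi_\beta - \xi_\alpha$. Applying \eqref{es3} with the role of $y$ there played by $-z$ produces the stated closed-form term with per-term remainder $O(|\xi_\beta - \xi_\alpha|^{-(n+1)/2} e^{-|\xi_\beta - \xi_\alpha|})$. Summing over $\beta$, the uniform condition together with the geometric counting used in the proof of \eqref{Holder estimate on interaction term} shows the accumulated error is of order $D_\alpha^{-1}\kappa_\alpha = o(\kappa_\alpha)$.

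For the first equality, I would follow the template of \cite[Lemma~2.1]{Wei}, adapted to the regime $1<p<2$. Writing $\sigma = W_\alpha + \sigma_\alpha$ and Taylor expanding yields
\[
\mathcal{I} = p\sum_{\beta\neq\alpha} W_\alpha^{p-1}W_\beta + R^{(1)} - \sum_{\gamma\neq\alpha} W_\gamma^p,
\]
where $R^{(1)} := (W_\alpha + \sigma_\alpha)^p - W_\alpha^p - p W_\alpha^{p-1}\sigma_\alpha$. For $1<p<2$, concavity of $t\mapsto t^{p-1}$ yields the pointwise bound $|R^{(1)}| \lesssim \min\bigl(W_\alpha^{p-2}\sigma_\alpha^2,\ \sigma_\alpha^p\bigr)$, so the claim reduces to proving
\[
\int_{\mathbb{R}^n}\Bigl(R^{(1)} - \sum_{\gamma\neq\alpha}W_\gamma^p\Bigr)\nabla W_\alpha \,\mathrm{d}x = o(\kappa_\alpha).
\]
To this end, I would partition $\mathbb{R}^n = \Omega_\alpha \cup \bigcup_{\beta\neq\alpha}\Omega_\beta$. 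On $\Omega_\beta$ with $\beta\neq\alpha$, $|\nabla W_\alpha|$ is of order $\kappa_\alpha$ near $\xi_\beta$, which combined with the pointwise bound $|\mathcal{I}|\lesssim e^{-pD_\alpha/2}$ from \eqref{Holder estimate on interaction term} yields a contribution $o(\kappa_\alpha)$. On $\Omega_\alpha$ I would split further according to whether $\sigma_\alpha \leq W_\alpha$ or $\sigma_\alpha > W_\alpha$: in the first subregion the sharper bound $|R^{(1)}|\lesssim W_\alpha^{p-2}\sigma_\alpha^2$ together with the expansion $\sigma_\alpha^2 = \sum_{\beta,\gamma\neq\alpha}W_\beta W_\gamma$ reduces the estimate to pair interactions controlled by \eqref{es1}--\eqref{es2}; in the second subregion the homogeneous bound $|R^{(1)}|\lesssim \sigma_\alpha^p$ suffices because $|\nabla W_\alpha|$ already carries the necessary exponential decay. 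The subtracted $\sum_\gamma W_\gamma^p$ term is handled analogously via \eqref{es2}, and the uniform condition reduces the infinite sums to effectively finitely many neighbor contributions.

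The main obstacle is that for $1<p<2$ the factor $W_\alpha^{p-2}$ in $R^{(1)}$ is singular where $W_\alpha$ is small, so the naive pointwise estimate used for $p\geq 2$ in \cite{awl} breaks down near $\partial\Omega_\alpha$. The refinement by subregion, choosing the $\sigma_\alpha^p$ bound where $\sigma_\alpha \geq W_\alpha$ and the $W_\alpha^{p-2}\sigma_\alpha^2$ bound elsewhere, is what restores control in this range and embodies the integral-estimate philosophy of \cite{liao} applied directly inside the interaction integral.
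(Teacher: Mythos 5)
Your treatment of the second equality (translation plus \eqref{es3}, then summation via the uniform condition) is fine and matches the paper. For the first equality your skeleton is also close to the paper's: the paper likewise extracts $pW_\alpha^{p-1}\sigma_\alpha$ and splits the integral according to $\{W_\alpha>\sigma_\alpha\}$ versus $\{W_\alpha\le\sigma_\alpha\}$ (its terms $J_1$--$J_4$), and your pointwise bound $|R^{(1)}|\lesssim\min\{W_\alpha^{p-2}\sigma_\alpha^2,\ \sigma_\alpha^p\}$ is correct. The gap is in how you close the estimate on the region $\{\sigma_\alpha\le W_\alpha\}$. You claim that expanding $\sigma_\alpha^2=\sum_{\beta,\gamma\neq\alpha}W_\beta W_\gamma$ ``reduces the estimate to pair interactions controlled by \eqref{es1}--\eqref{es2}'', but the diagonal pair already defeats this: by \eqref{es2}, $\int_{\mathbb{R}^n}W_\alpha^{p-1}W_\beta^{2}\,\mathrm{d}x\approx|\xi_\alpha-\xi_\beta|^{-\frac{(p-1)(n-1)}{2}}e^{-(p-1)|\xi_\alpha-\xi_\beta|}$, which for $1<p<2$ is far \emph{larger} than $\kappa_\alpha$, not $o(\kappa_\alpha)$. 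The estimate is only rescued by actually spending the region inequality $\sigma_\alpha\le W_\alpha$ to redistribute exponents, e.g. $W_\alpha^{p-1}\sigma_\alpha^{2}\le W_\alpha^{p-\theta}\sigma_\alpha^{1+\theta}$ with $0<\theta<\frac{p-1}{2}$, so that every application of \eqref{es2} carries an exponent $1+\theta>1$; this $\theta$-interpolation is exactly the device the paper uses in its bounds for $J_1,J_2$ (and symmetrically $J_3,J_4$), and it is the whole point of the $1<p<2$ case. The same omission affects the subtracted term: $\int W_\gamma^{p}W_\alpha\,\mathrm{d}x\approx\kappa_\alpha$ over $\mathbb{R}^n$, so quoting \eqref{es2} alone gives only $O(\kappa_\alpha)$; you must use $W_\gamma\le W_\alpha$ on the region to get $W_\gamma^{p-\theta}W_\alpha^{1+\theta}$ and hence $o(\kappa_\alpha)$.

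A second, related flaw is your disposal of the cells $\Omega_\beta$, $\beta\neq\alpha$. The sup bound from \eqref{Holder estimate on interaction term} on $\Omega_\beta$ is $e^{-\frac{p}{2}D_\beta}$ (not $e^{-\frac{p}{2}D_\alpha}$), and under the uniform condition $D_\beta$ can be as small as $D_\alpha/C_0$; moreover $|\nabla W_\alpha|\sim\kappa_\alpha$ only near $\xi_\beta$, while on the part of $\Omega_\beta$ bordering $\Omega_\alpha$ it is of size $e^{-\frac{1}{2}|\xi_\alpha-\xi_\beta|}$. Combining the two suprema, which are attained at different points, gives only $e^{-\frac{p}{2}D_\beta-\frac{1}{2}|\xi_\alpha-\xi_\beta|}$, which is not $o(\kappa_\alpha)$ once $C_0>p$. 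Here again one needs the genuinely pointwise bound $|\mathcal{I}|\lesssim W_\beta^{p-1}\sigma_\beta$ on $\Omega_\beta$ together with $W_\alpha\le W_\beta$ there and the same interpolation trick. So the conclusion and the overall decomposition are right, but as written the two central estimates would not survive in exactly the exponent range $1<p<2$ that the lemma addresses; the missing idea is the exponent redistribution keeping every \eqref{es2}-exponent strictly above $1$.
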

\begin{proof}
 Let's decompose $$\int_{\mathbb{R}^n}   \mathcal{I} \nabla W_{\alpha} \, \,\mathrm{d}x=J_1+J_2+J_3+J_4,$$
  where
  \[ J_1=\int_{W_{\alpha}>\sigma_{\alpha}}\Big(pW_{\alpha}^{p-1}\sigma_{\alpha}-\sum_{\beta \neq \alpha}W_\beta^p\Big) \nabla W_{\alpha}  \,\mathrm{d}x ,  \]
  \[J_2=\int_{W_{\alpha}>\sigma_{\alpha}}  \Big( \sigma^{p}-pW_{\alpha}^{p-1}\sigma_{\alpha}-W_{\alpha}^p \Big)  \nabla W_{\alpha}  \,\mathrm{d}x,\]
  \[ J_3=\int_{W_{\alpha}\leq\sigma_{\alpha}}  \Big( p\sigma_{\alpha}^{p-1}W_\alpha+\sigma_{\alpha}^{p}-\sum_{\alpha\in \mathbb{Z}}W_\alpha^p \Big)\nabla W_{\alpha}  \,\mathrm{d}x ,\]
  \[J_4= \int_{W_{\alpha}\leq\sigma_{\alpha}}  \left( \sigma^{p}-\sigma_{\alpha}^{p}-p\sigma_{\alpha}^{p-1}W_\alpha \right)\nabla W_{\alpha}  \,\mathrm{d}x .\]
For some $\theta>0$ small enough, we have
\begin{equation}\label{est-J1}
\begin{aligned}
&\left| J_1 - p \int W_{\alpha}^{p-1}\sigma_{\alpha} \nabla W_{\alpha} \,\mathrm{d}x\right|
\\&=\left| \int_{W_{\alpha}>\sigma_{\alpha}} \sum_{\beta\neq \alpha} W_\beta^p \nabla W_{\alpha} \, \mathrm{d}x+ \int_{W_{\alpha} \leq \sigma_{\alpha}} p W_{\alpha}^{p-1} \sigma_{\alpha} \nabla W_{\alpha} \, \mathrm{d}x \right| \\&
\lesssim \sum_{\beta\neq \alpha} \int \left( W_\beta^{p-\theta} W_{\alpha}^{1+\theta} + W_\beta^{1+\theta} W_{\alpha}^{p-\theta}  \right)\, \mathrm{d}x \\&
\lesssim o( \kappa_{\alpha}) .
\end{aligned}
\end{equation}

Next, for $J_2$, it holds that
\begin{equation}\label{est-J2}
\begin{aligned}
|J_2|
&\lesssim \int_{W_{\alpha} > \sigma_{\alpha}} W_{\alpha}^{p-1} \sigma_{\alpha}^2  \, \mathrm{d}x
\lesssim \int W_{\alpha}^{p-\theta} \sigma_{\alpha}^{1+\theta}  \, \mathrm{d}x
\lesssim o( \kappa_{\alpha}) .
\end{aligned}
\end{equation}

Similarly, for $J_3$, we estimate
\begin{equation}\label{est-J3}
\begin{aligned}
|J_3|
&\lesssim \int \sigma_{\alpha}^{p-\theta} W_{\alpha}^{1+\theta}  \, \mathrm{d}x
+ \int_{W_{\alpha} \leq \sigma_{\alpha}} \Big( \sigma_{\alpha}^p \, \mathrm{d}x- \sum_{\beta\neq \alpha} W_\beta^p \Big) \nabla W_{\alpha}  \, \mathrm{d}x\\
&\lesssim o( \kappa_{\alpha}) + \sum_{\substack{\beta,\gamma\\ \beta,\gamma \neq \alpha,\, \beta\neq \gamma}} \int W_\beta^{p-1} W_\gamma W_{\alpha} \, \mathrm{d}x \\
&\lesssim o( \kappa_{\alpha}).
\end{aligned}
\end{equation}
Finally, for $J_4$, we obtain
\begin{equation}\label{est-J4}
\begin{aligned}
|J_4|
&\lesssim \int_{W_{\alpha} \leq \sigma_{\alpha}} W_{\alpha}^3 \sigma_{\alpha}^{p-2}  \, \mathrm{d}x
\lesssim \int W_{\alpha}^{1+\theta} \sigma_{\alpha}^{p-\theta}  \, \mathrm{d}x
\lesssim o( \kappa_{\alpha}) .
\end{aligned}
\end{equation}

Combining \eqref{est-J1}–\eqref{est-J4} and \eqref{es3}, we complete the proof.
\end{proof}

\end{enumerate}

Now we can give the complete proof of Theorem \ref{main result}:

\begin{proof}[Proof of Theorem \ref{main result}] 
Let $1<p<2$.
  Test the equation \eqref{3-13} with $\nabla W_{\alpha}$  on $B_{\frac{D_{\alpha}}{2}}( \xi_{\alpha})$   for some fixed $\alpha$,  by Proposition \ref{prop estimate on error fct}, we deduce that
  \begin{equation}\label{1111}
  \begin{aligned}
  \int_{B_{\frac{D_{\alpha}}{2}}( \xi_{\alpha})} & \left( \mathcal{I} +  N(\phi)+ p\sigma^{p-1} \phi-pW_{\alpha}^{p-1} \phi \right)\nabla W_{\alpha} \, \mathrm{d}x\\
  =&\int_{B_{\frac{D_{\alpha}}{2}}( \xi_{\alpha})}  \left[(-\Delta+1) \phi-pW_{\alpha}^{p-1}\phi\right]\nabla W_{\alpha} \, \mathrm{d}x\\
  =&\int_{B_{\frac{D_{\alpha}}{2}}( \xi_{\alpha})}  \left[(-\Delta+1) \nabla W_{\alpha}-pW_{\alpha}^{p-1}\nabla W_{\alpha}\right]\phi \, \mathrm{d}x\\
  &+\int_{\partial B_{\frac{D_{\alpha}}{2}}( \xi_{\alpha})} \left(  \phi\partial_n \nabla W_{\alpha} -\partial_n\phi\nabla W_{\alpha}\right) \,\mathrm{d}x\\
  =&\int_{\partial B_{\frac{D_{\alpha}}{2}}( \xi_{\alpha})} (\phi\partial_n \nabla W_{\alpha} -\partial_n\phi\nabla W_{\alpha}) \,\mathrm{d}x\\
  \lesssim &D_{\alpha}^{\frac{n-1}{2}} e^{-\frac{(2-\tau)D_{\alpha}}{2}-\frac{(p-1)D}{2}}=o(\kappa_{\alpha}).
  \end{aligned}
  \end{equation}
We point out that in the last inequality, we have used the fact  $\tau<\frac{p-1}{C_0}$.

  Note that
  \begin{equation}
  \begin{aligned}
\Bigg|\int_{B^c_{\frac{D_{\alpha}}{2}}( \xi_{\alpha})}  \mathcal{I}\nabla W_{\alpha} \,\mathrm{d}x\Bigg|&\lesssim |J_3|+|J_4|+\Bigg|\int_{\Omega_{\alpha}\setminus B_{\frac{D_{\alpha}}{2}}( \xi_{\alpha}) } \mathcal{I}\nabla W_{\alpha} \,\mathrm{d}x\Bigg|\\&
\lesssim o(\kappa_{\alpha})+ \int_{B^c_{\frac{D_{\alpha}}{2}}( \xi_{\alpha}) }  (W_{\alpha}^p+e^{-10C_0D})W_{\alpha}
\\&= o(\kappa_{\alpha}) .
    \end{aligned}
  \end{equation}
  Hence by Lemma \ref{lemma1},  we  obtain $$\int_{B_{\frac{D_{\alpha}}{2}}( \xi_{\alpha})}  \mathcal{I}\nabla W_{\alpha} \,\mathrm{d}x=p\bar{c}\sum_{\beta\neq \alpha} \frac{\xi_{\beta}-\xi_{\alpha}}{|\xi_{\beta}-\xi_{\alpha}|}|\xi_{\beta}-\xi_{\alpha}|^{-\frac{n-1}{2}}e^{-|\xi_{\beta}-\xi_{\alpha}|}+o(\kappa_{\alpha}).$$
 On the other hand, since
  \begin{equation}
  \begin{aligned}
   (\sigma^{p-1}-W_{\alpha}^{p-1}) \phi |\nabla W_{\alpha}|& \lesssim
  \begin{cases}
 |\phi| W_{\alpha}^{p-1}\sigma_{\alpha}, & \mbox{if } W_{\alpha}>\sigma_{\alpha} \\
   \sigma_{\alpha}^{p-1} |\phi| W_{\alpha}, & \mbox{if } W_{\alpha}\leq \sigma_{\alpha} .
  \end{cases}\\&
  \lesssim
    \begin{cases}
  \sigma_{\alpha}W_{\alpha}|\phi|^{p-1}  , & \mbox{if } |\phi|< W_{\alpha}, W_{\alpha}>\sigma_{\alpha}  \\
    |\phi|^{p+1}, & \mbox{if } |\phi|\geq W_{\alpha}>\sigma_{\alpha}\\
     |\phi|^{p+1}, & \mbox{if } W_{\alpha}\leq \sigma_{\alpha}\leq |\phi| \\
    \sigma_{\alpha} |\phi|^{p-1} W_{\alpha}, & \mbox{otherwise}  ,
  \end{cases}
  \end{aligned}
  \end{equation}
and $e^{-\frac{(2-\tau)D_{\alpha}}{2}-\frac{(p-1)D}{2}}=o\left(e^{-\frac{D_{\alpha}}{2}}\right)$, we have
\begin{equation}\label{4.111}
\begin{aligned}
\Bigg{|}\int_{B_{\frac{D_{\alpha}}{2}}( \xi_{\alpha})} \left(\sigma^{p-1}-W_{\alpha}^{p-1}\right) \phi \nabla W_{\alpha} \,\mathrm{d}x\Bigg{|}&\lesssim
 \int_{B_{\frac{D_{\alpha}}{2}}( \xi_{\alpha})} \left( \sigma_{\alpha} |\phi|^{p-1} W_{\alpha}+|\phi|^{p+1} \right) \,\mathrm{d}x\\&
 \lesssim   D_{\alpha}^n\kappa_{\alpha}e^{-\frac{(p-1)D_{\alpha}}{2}}+D_{\alpha}^ne^{-\frac{(p+1)D_{\alpha}}{2}}\\&
 \lesssim o(\kappa_{\alpha}).
 \end{aligned}
\end{equation}
Finally, for the nonlinear term, we have
   \begin{equation}\label{3.36}
   \begin{aligned}
    &\Bigg|\int_{B_{\frac{D_{\alpha}}{2}}( \xi_{\alpha})}  N(\phi)\nabla W_{\alpha} \,\mathrm{d}x\Bigg|\\
    \lesssim &  \int_{B_{\frac{D_{\alpha}}{2}}( \xi_{\alpha})} \Big{|}|\sigma+\phi|^{p-1}(\sigma+\phi) -\sigma^p -p\sigma^{p-1} \phi\Big{|}W_{\alpha} \,\mathrm{d}x\\
    \lesssim &\int_{\{|\phi|>\sigma\} \cap B_{\frac{D_{\alpha}}{2}}( \xi_{\alpha}) } |\phi|^{p+1} \,\mathrm{d}x + \int_{\{|\phi|\leq \sigma\}\cap B_{\frac{D_{\alpha}}{2}}( \xi_{\alpha})}   \sigma^{p-2} |\phi|^2W_{\alpha} \,\mathrm{d}x\\
    \lesssim  &D_{\alpha}^ne^{-\frac{(p+1)D_{\alpha}}{2}}
    +\int_{ B_{\frac{D_{\alpha}}{2}}( \xi_{\alpha})}   \sigma |\phi|^{p-1}W_{\alpha} \,\mathrm{d}x\\
     \lesssim &o(\kappa_{\alpha}) .
    \end{aligned}
    \end{equation}
  Therefore, we deduce from \eqref{1111}-\eqref{3.36} that 
  \begin{equation}
  \frac{\sum_{\beta\neq \alpha} \frac{\xi_{\beta}-\xi_{\alpha}}{|\xi_{\beta}-\xi_{\alpha}|}|\xi_{\beta}-\xi_{\alpha}|^{-\frac{n-1}{2}}e^{-|\xi_{\beta}-\xi_{\alpha}|} }{\kappa_{\alpha}}=o(1).
    \end{equation} 
     Since $\sum_{\beta\in \mathcal{N}_\alpha} |\xi_{\alpha}-\xi_{\beta}|^{-\frac{n-1}{2}}e^{-|\xi_{\alpha}-\xi_{\beta}|}\sim \kappa_{\alpha}$,  while  $$\sum_{\beta \neq \alpha, \beta\notin \mathcal{N}_\alpha} |\xi_{\alpha}-\xi_{\beta}|^{-\frac{n-1}{2}}e^{-|\xi_{\alpha}-\xi_{\beta}|}=o(\kappa_{\alpha}),$$
    by choosing a subsequence $\varepsilon_{n}\to 0$ such that 
    $$\frac{|\xi_{\beta,n}-\xi_{\alpha,n}|^{-\frac{n-1}{2}}e^{-|\xi_{\beta,n}-\xi_{\alpha,n}|} }{\sum_{\beta\in \mathcal{N}_\alpha} |\xi_{\alpha,n}-\xi_{\beta,n}|^{-\frac{n-1}{2}}e^{-|\xi_{\alpha,n}-\xi_{\beta,n}|}} \to l_{\alpha\beta}  $$
    and $$\frac{\xi_{\beta,n}-\xi_{\alpha,n}}{|\xi_{\beta,n}-\xi_{\alpha,n}|}\to \frac{\xi_{\beta,0}-\xi_{\alpha,0}}{|\xi_{\beta,0}-\xi_{\alpha,0}|},$$
     we finish the proof.
\end{proof}

\end{document}